\def\tbcaption{\def\@captype{table}\caption}
\newtheorem{theorem}{Theorem}[section]
\newtheorem{lemma}[theorem]{Lemma}
\theoremstyle{definition}
\newtheorem{definition}[theorem]{Definition}
\newtheorem{proposition}[theorem]{Proposition}
\newtheorem{conjecture}{Conjecture}[section]
\theoremstyle{remark}
\newtheorem{remark}[theorem]{Remark}
\numberwithin{equation}{section}
\newcommand{\beq}[1]{\begin{equation}\label{#1}}
\newcommand{\eeq}{\end{equation}}
\def\KK{\mathbb K}
\def\QQ{\mathbb Q}
\def\ZZ{\mathbb Z}
\def\RR{\mathbb R}
\def\NN{\mathbb N}
\begin{document}
\title[On the $D(4)$-pairs $\{a, ka\}$ with $k\in \{2,3,6\}$]{\small On the $D(4)$-pairs $\{a, ka\}$ with $k\in \{2,3,6\}$}

\author[K. N. Ad\'edji, M. Bliznac Trebje\v{s}anin, A. Filipin and A. Togb\'e]{Kou\`essi Norbert Ad\'edji, Marija {Bliznac Trebje\v{s}anin}, Alan Filipin and Alain Togb\'e}
\date{\today}
\maketitle
\begin{abstract}
Let $a$ and $b=ka$ be positive integers with $k\in \{2, 3, 6\},$ such that $ab+4$
is a perfect square. In this paper, we study the extensibility of the $D(4)$-pairs $\{a, ka\}.$ More precisely, we prove that by considering three families of positive integers $c$ depending on $a,$ if $\{a, b, c, d\}$ is the set of positive integers which has the property that the product of any two of its elements increased by $4$ is a perfect square, then $d$ in given by $$d=a+b+c+\frac{1}{2}\left(abc\pm \sqrt{(ab+4)(ac+4)(bc+4)}\right).$$ As a corollary, we prove that any $D(4)$-quadruple which contains the pair $\{a, ka\}$ is regular.
\end{abstract}

{\it Keywords}: Diophantine $m$-tuples, Pellian equations, Linear form in logarithms, Reduction method.

\section{Introduction}\label{intr}

\begin{definition} Let $n\neq0$ be an integer. We call a set of $m$ distinct positive integers a $D(n)$-$m$-tuple or an $m$-tuple with the property $D(n)$, if the product of any two of its distinct elements increased by $n$ is a perfect square.
\end{definition}

For a $D(4)$-triple $\{a,b,c\}$, $a<b<c$, we define
$$d_{\pm}=d_{\pm}(a,b,c)=a+b+c+\frac{1}{2}\left(abc\pm \sqrt{(ab+4)(ac+4)(bc+4)}\right).$$
It is straightforward to check that $\{a,b,c,d_{+}\}$  is a $D(4)$-quadruple, which we will call a regular quadruple. A quadruple which is not regular is called an irregular quadruple. If $d_{-}\neq 0$ then $\{a,b,c,d_{-}\}$ is also a regular $D(4)$-quadruple with $d_{-}<c$. It is conjectured that an irregular quadruple doesn't exist.

\begin{conjecture}
Any $D(4)$-quadruple is regular.
\end{conjecture}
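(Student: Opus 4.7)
The plan is to attack the conjecture by the standard machinery for Diophantine $m$-tuples: convert the problem into a system of simultaneous Pell-type equations, and combine an explicit lower bound for a hypothetical irregular extension $d$ with an upper bound coming from the theory of linear forms in logarithms. Suppose $\{a,b,c,d\}$ is an irregular $D(4)$-quadruple with $a<b<c<d$. The three conditions that $ad+4$, $bd+4$, $cd+4$ be perfect squares, together with the square conditions for the triple $\{a,b,c\}$, yield after elimination of $d$ a system of three Pell equations in the square-root variables $x,y,z$, and the existence of a common positive solution is the core object to be ruled out.

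First I would classify the integer solutions of each individual Pell equation as members of finitely many binary recurrence sequences indexed by the associated fundamental solution, and then match the three recurrences via congruences modulo $c$ (and modulo $c^2$) to force the recurrence indices to be closely related. Together with a careful gap analysis comparing consecutive admissible values of $d$, this step yields a polynomial lower bound of the form $d\geq f(a,b,c)$ of large degree in $c$, ruling out candidates with small index and pinning down the shape that a putative counterexample would have to take.

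Next I would translate the matching condition on the recurrences into a linear form $\Lambda$ in logarithms of algebraic numbers and apply Matveev's sharpening of Baker's theorem to derive an upper bound of roughly $\log d \ll \log a\cdot\log c$, with fully explicit constants depending on the heights of the relevant fundamental units. Juxtaposing this upper bound with the polynomial lower bound rules out all irregular quadruples for which $c$ is sufficiently large relative to $a$. The remaining finite list of triples $(a,b,c)$ is then handled by a Baker--Davenport reduction, typically via continued fractions or the LLL algorithm, which reduces each residual case to a finite verification.

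The principal obstacle is the absence of an absolute bound on the smallest element $a$: the Baker-type upper bound depends effectively but not uniformly on $a$, whereas the gap-based lower bound degenerates when $b$ is only a small multiple of $a$ and $c$ is close to $b$. Overcoming this would require either a unified elimination removing the $a$-dependence, or a structural theorem forcing $a$ to be bounded, neither of which is currently known in full generality; this is why the conjecture remains open. The present paper sidesteps the obstacle by restricting to the structured family $b=ka$ with $k\in\{2,3,6\}$, where the explicit algebraic relations between $a$, $b$ and $c$ allow every step of the program above to be completed effectively, recovering regularity for all $D(4)$-quadruples containing such a pair.
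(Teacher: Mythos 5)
The statement you were asked to prove is stated in the paper as a \emph{conjecture}, and the paper does not prove it: its Theorem~\ref{main result} establishes regularity only for $D(4)$-quadruples containing a pair $\{a,ka\}$ with $k\in\{2,3,6\}$. Your text likewise does not prove it. What you have written is a program, not a proof: every load-bearing step is described conditionally (``I would classify\ldots'', ``I would translate\ldots''), no gap principle is actually derived, no height estimates or explicit Matveev parameters are computed, and the reduction step is only invoked generically. Most tellingly, you yourself concede in the final paragraph that the Baker-type upper bound and the gap-based lower bound do not close without either a uniform elimination of the dependence on $a$ or a structural bound on $a$, ``neither of which is currently known'' --- which is precisely the reason the general conjecture is open. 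An honest description of why the standard machinery fails in general cannot be accepted as a proof of the general statement; as it stands there is not a single complete deduction that applies to an arbitrary irregular quadruple $\{a,b,c,d\}$.

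On the comparative side, your outline does accurately mirror the method the paper uses for its restricted result: elimination of $d$ leads to the simultaneous Pellian equations \eqref{eqn:pell_ac}--\eqref{eqn:pell_ab}, the fundamental solutions are pinned down (Lemma~\ref{lem:inital_terms}, Lemma~\ref{lem:lem_4}), a linear form in three logarithms is bounded above via Lemma~\ref{bound-Lambda} and below via Matveev (Proposition~\ref{proposition-1}), congruence arguments give lower bounds for the indices in terms of $a$ (Lemmas~\ref{lower-Type-a} and \ref{lower-Type-b}), and Baker--Davenport reduction (Lemma~\ref{Dujella-Pethoe}) finishes the finitely many remaining cases. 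But each of those steps exploits the special structure $b=ka$, $r^2=ka^2+4$, the parametrization $a=a_p$ growing exponentially in $p$, and the explicit list $c\in\{c_1^{\pm},c_2^{\pm},c_3^{\pm}\}$ coming from \cite[Proposition 1.8]{mbt}; none of this is available for a general quadruple, so your plan cannot be executed at the claimed level of generality. If your goal was the conjecture itself, the missing idea is exactly the one you name and do not supply; if your goal was the paper's theorem, you would need to carry out the estimates rather than gesture at them.
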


In this paper, we consider extensions of the $D(4)$-pairs $\{a,ka\}$, $k=2,3,6$ to a $D(4)$-quadruple $\{a,ka,c,d\}$ by following the method in \cite{ahpt}, where extensions of Diophantine pairs $\{a, ka\}$, with $k=3, 8$, were studied. 
We conjecture that $d=d_+(a,ka,c)$, i.e., that there is no irregular quadruple of this form. The validity of the conjecture is shown for some special pairs and triples. One of the results of interest for our case is the following lemma, which gives us a lower bound for the second element of the pair $b=ka$.
\begin{lemma}\cite[Lemma 2.2]{mbt}\label{lower_on_b}
Let $\{a,b,c,d\}$ be a $D(4)$-quadruple such that $a<b<c<d_+<d$. Then $b>10^5$.
\end{lemma}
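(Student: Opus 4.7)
The plan is to argue by contradiction. Suppose $\{a,b,c,d\}$ is an irregular $D(4)$-quadruple with $a<b<c<d_+<d$ and $b\leq 10^5$; the aim is to derive a contradiction by combining the standard Pellian framework with a finite computer search. Writing $ab+4=r^2$, $ac+4=s^2$, $bc+4=t^2$ and $ad+4=x^2$, $bd+4=y^2$, $cd+4=z^2$, the extension of $\{a,b,c\}$ by $d$ is controlled by the simultaneous Pell system
\[
bx^2-ay^2=4(b-a),\qquad cy^2-bz^2=4(c-b),
\]
whose positive integer solutions fall into finitely many pairs of binary recurrences. The extension $d$ corresponds to matching indices $(m,n)$ in these recurrences, and the hypothesis $d>d_+$ forces $(m,n)$ to strictly exceed the indices of the regular extension $d_+$.

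First I would push through the standard gap principle together with the recurrence-based lower bound, producing an explicit inequality of the shape $d\geq \alpha\, c^{2m-2}$ with $\alpha$ depending on $a,b$, and a polynomial lower bound $c\geq C\,b^{\kappa}$ coming from the congruence classes modulo $b$ in which $c$ must lie. These combine to give $\log d\gg m\log c\gg m\log b$. On the other hand, equating the two representations of $\sqrt{d}$ extracted from the two recurrences produces a linear form in three logarithms of algebraic numbers over the biquadratic field $\QQ(\sqrt{ab+4},\sqrt{ac+4})$; applying Matveev's theorem then gives an upper bound of the shape $\log d\ll (\log c)(\log b)(\log m)$. Comparing the two bounds yields an absolute upper bound $m\leq M_0(b)$ depending only mildly on $b$.

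The main obstacle, and the bulk of the work, is the exhaustive case analysis that follows: for each of the finitely many pairs $(a,b)$ with $b\leq 10^5$ and $ab+4$ a square, and for each admissible $c$ extending $\{a,b\}$ whose first recurrence index is at most $M_0(b)$, one must run a Baker--Davenport type reduction to rule out any irregular match exceeding $d_+$. The hard part is not any single analytic step but organising the search so that it actually terminates: without a sufficiently sharp gap principle the number of triples $(a,b,c)$ to inspect becomes prohibitive, so one has to iteratively refine the bounds on $c$ and on $m$, and exploit congruence restrictions modulo $b$, $c$ and $d_+$ to eliminate as many residue classes as possible before the final numerical reduction is performed in each surviving case.
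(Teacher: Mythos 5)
This lemma is not proved in the paper at all: it is imported verbatim from \cite[Lemma 2.2]{mbt}, so there is no internal argument to compare against. Judged on its own, your proposal is a proof \emph{plan} rather than a proof, and it has concrete gaps that would prevent it from closing. First, several of the intermediate bounds are only asserted ``of the shape'' and are in fact false in the stated generality: there is no polynomial lower bound $c\geq C\,b^{\kappa}$ valid for every admissible third element, since $c=c_1^-=a+b-2r$ can be smaller than $b$ (the paper itself discusses triples $\{c_1^-,a,b\}$), so the chain $\log d\gg m\log c\gg m\log b$ does not follow as written. Second, and more fundamentally, the ``exhaustive case analysis'' you describe is not a finite procedure as organised: for each pair $(a,b)$ with $b\leq 10^5$ and $ab+4$ a square there are \emph{infinitely} many admissible extensions $c$ (the whole family $c_\nu^{\pm}$, $\nu\geq 1$), and Matveev plus Baker--Davenport gives a bound on the recurrence index $m$ of $d$ for a \emph{fixed} triple, with constants depending on $c$; it does not bound $\nu$, so ``run a reduction for each surviving $c$'' never terminates unless you supply a separate uniform-in-$c$ argument (a bound on $\nu$, or a proof that large $c$ forces regularity), which your sketch does not contain.

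Beyond these structural issues, the proposal never actually carries out or even quantifies the computation it relies on, so even where the strategy is sound in spirit (Pellian recurrences, gap principles, linear forms in three logarithms, reduction) the statement $b>10^5$ is not established. If you want to use this lemma, the honest route in this paper is the one the authors take: cite \cite[Lemma 2.2]{mbt}, whose proof rests on previously established regularity results for quadruples containing small elements rather than on a fresh search of the kind you outline.
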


If $\{a,ka\}$ is a $D(4)$-pair, then there exists $r\in \NN$ such that
\begin{equation}\label{pair eqn}
ka^2+4=r^2.
\end{equation}
Rewriting (\ref{pair eqn}) as a Pellian equation, yields 
\begin{equation}\label{pair Pellian} r^2-ka^2=4.
\end{equation}
The theory of Pellian equations guarantees that there is only one fundamental solution $(r_1,a_1)$ of (\ref{pair Pellian}), for any $k=2,3,6$, namely $(r_1,a_1)\in\{(6,4),(4,2),(10,4)\}$ (in that order). All solutions $(r_p,a_p)$ of the equation (\ref{pair Pellian}) are given by
\begin{align}\label{sequence-a_p}
\frac{r_p+a_p\sqrt{k}}{2}=\left(\frac{r_1+a_1\sqrt{k}}{2}\right)^p,\quad p\in\mathbb{N}.
\end{align}
It is easy to see that $\gcd(r,a)=2$ holds in every case. And since $b=ka > 10^5$ we can also deduce a lower bound for $a$. For $k=2$, we have $a\geq a_7=161564$, which also gives us $b=2a\geq 323128$. In the case $k=3$, we have $a\geq a_9= 81090$ and $b=3a\geq 3a_9\geq 243270.$ Finally, for $k=6$, the lower bounds $a\geq a_5=38804$, $b\geq 232824$ hold.

In general, if we extend a $D(4)$-pair $\{a,b\}$ to a $D(4)$-triple $\{a,b,c\}$ then there exist $s,t\in\mathbb{N}$ such that
\begin{align*}
    ac+4&=s^2,\\
    bc+4&=t^2.
\end{align*}
Combining these two equalities yields a Pellian equation
\begin{equation}\label{eqn: pell for c}
    at^2-bs^2=4(a-b).
\end{equation}
Its solutions $(t,s)$ are given by
$$
(t_{\nu}+s_{\nu}\sqrt{k})=(t_0+s_0\sqrt{k})\left(\frac{r+\sqrt{ab}}{2}\right)^{\nu},\quad \nu\geq 0,
$$
where $(t_0,s_0)$ is a fundamental solution of the equation (\ref{eqn: pell for c}) and $\nu$ is a nonnegative integer.

In \cite[Lemma 6.1]{mbt}, it has been shown that $(t_0,s_0)=(\pm2, 2)$ are the only fundamental solutions when $b\leq6.85a $, which is our case. We can represent the solutions $(t_{\nu},s_{\nu})$ as pair of binary recurrence sequences
\begin{align}
    &t_0=\pm 2,\ t_1=b\pm r,\ t_{\nu+2}=rt_{\nu +1}-t_{\nu},\label{sequence_t_nu}\\
    &s_0= 2,\ s_1=r\pm a,\ s_{\nu+2}=rs_{\nu +1}-s_{\nu},\ \nu\geq 0.\label{sequence_s_nu}
\end{align}
Since $c=\frac{s^2-4}{a}$, we give an explicit expression for the third element $c$ in the terms of $a$ and $b$ by
\begin{equation}\label{eqn:c_nu_general_exp}
    \resizebox{\textwidth}{!}{
    $c=c_{\nu}^{\pm}=\frac{4}{ab}\left\{\left(\frac{\sqrt{b}\pm\sqrt{a}}{2}\right)^2\left(\frac{r+\sqrt{ab}}{2}\right)^{2\nu}+\left(\frac{\sqrt{b}\mp\sqrt{a}}{2}\right)^2\left(\frac{r-\sqrt{ab}}{2}\right)^{2\nu}-\frac{a+b}{2}\right\},$
    }
\end{equation}
where $\nu\geq 0$ is an integer. From \cite[Proposition 1.8]{mbt}, if $a\geq 35$ then $c\geq c_{4}^-$ cannot hold, i.e., it remains to observe the cases $c\in\{c_1^{\pm},c_2^{\pm},c_3^{\pm}\}$. Let us list them in a  more suitable form
\begin{align*}
c_1^{\pm}&=a+b\pm 2r,\\
  c_2^{\pm}&=(ab+4)(a+b\pm2r)\mp4r,\\
  c_3^{\pm}&=(a^2b^2+6ab+9)(a+b\pm2r)\mp4r(ab+3).
  \end{align*}
Let us mention some observations in the case $c_1^-.$ Note that if $k\in \{2, 3\},$ then $c_1^-<a<b$ and in case $k=6$ we have $a<c_1^-<b.$ So, in these cases, we consider the $D(4)$-triple $\{a, b, c\}$ of the form $\{c_1^-, a, b\}$ or $\{a, c_1^-, b\}.$ 

The present paper deals with two closely related families, viz. those of $D(4)$-triples mentioned in this section. The outcome of our study is the theorem below, showing that each of the triples under scrutiny has a unique extension to quadruple. In particular, the next result shows that the above mentioned conjecture is true for the families of $c$ examined in this paper.
\begin{theorem}\label{main result}
Let $k$ and $\nu$ be positive integers such that $k\in \{2, 3, 6\}.$ If $\{a, b, c_{\nu}^\pm, d\}$ is a $D(4)$ quadruple with $b=ka$, then it is regular. In other words, we have $d=d_{\pm}.$
\end{theorem}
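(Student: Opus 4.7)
The plan is to follow the Baker--Davenport framework already used in \cite{ahpt} and in numerous $D(4)$ extensibility papers, adapted to the three families $c_\nu^\pm$, $\nu\in\{1,2,3\}$, and the three values of $k$. Assume $\{a,b,c,d\}$ is a $D(4)$-quadruple with $c=c_\nu^\pm$ and $d\neq d_\pm(a,b,c)$, so in particular $d>d_+$. Setting $ad+4=x^2$, $bd+4=y^2$, $cd+4=z^2$ and eliminating $d$ pairwise produces Pellian equations whose positive integer solutions are encoded by two binary recurrence sequences $(v_m)$ (from the pair $(a,c)$) and $(w_n)$ (from the pair $(b,c)$). A classification of fundamental solutions, analogous to the one recalled for (\ref{eqn: pell for c}) from \cite{mbt}, should leave only a handful of admissible initial values, so that the existence of $d$ forces an equality $v_m=w_n$ with $m,n\geq 2$ for some admissible choice of initial terms.

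The first analytic step is a congruence argument modulo $c$ (and where needed modulo $c^2$, in the spirit of Dujella). This forces a quadratic relation of the shape $\pm a m^2 \equiv \pm b n^2 \pmod{c}$, which, combined with the explicit size of $c_\nu^\pm$ as a polynomial in $a$ and $r$, gives a lower bound of the form $m \gtrsim \sqrt{c/a}$, hence roughly $m\gtrsim a^{\nu-1/2}$. The finitely many small indices not covered by this bound are ruled out by direct substitution, using the lower bound $b>10^5$ from Lemma~\ref{lower_on_b} and the precise formulas for $c_1^\pm, c_2^\pm, c_3^\pm$. Particular care is needed for $c=c_1^-$ with $k\in\{2,3\}$, where $c<a<b$, so one must re-label and treat the triple as $\{c_1^-,a,b\}$ before running the congruence argument.

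Next, rewriting $v_m=w_n$ through the Binet formulas with dominant roots $\alpha=(r+\sqrt{ab})/2$ and $\beta=(t+\sqrt{bc})/2$ produces a nontrivial linear form in three logarithms
\[
\Lambda = m\log\alpha - n\log\beta + \log\kappa,
\]
with $\kappa$ an explicit algebraic number of controlled height depending on the chosen initial terms. Matveev's theorem then yields a bound of the form $m \leq C (\log m)(\log a)$ with an absolute $C$; combining it with $m\gtrsim a^{\nu-1/2}$ bounds $a$ (and hence $c$) from above. A Baker--Davenport reduction, or LLL applied to $\Lambda/\log\alpha$, is then run separately in each sub-case (indexed by $k$, $\nu$, the $\pm$ sign in $c_\nu^\pm$, and the $\pm 2$ in the fundamental solution) to lower the bound into a range that can be compared against the explicit lower bounds $a\geq a_7, a_9, a_5$ recorded above. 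Any surviving triples are then eliminated by a finite computer search.

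The main obstacle is expected to be the bookkeeping: the sign choices together with the order reversal when $c=c_1^-$ with $k\in\{2,3\}$ multiply the number of sub-cases, and in each of them one must certify $\Lambda\neq 0$ before invoking Matveev, which is the genuinely delicate point because $\Lambda$ is built from very close quantities once $d$ is nearly $d_+$. A secondary difficulty is keeping the Matveev constants and the reduction bases uniform across $k\in\{2,3,6\}$; this should be achievable by systematically expressing every height and every $\alpha,\beta,\kappa$ in terms of $a$ and $r$ via $b=ka$ and $r^2=ka^2+4$, so that the analytic estimates become essentially $k$-independent and only the final reduction step has to be re-run for each $k$.
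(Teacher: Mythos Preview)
Your overall Baker--Davenport framework is correct in spirit and matches what the paper does for $c=c_3^\pm$, but there is a genuine gap in the case $\nu=1$. The congruence argument you propose---working modulo $c$ with the sequences $v_m,w_n$ attached to the pairs $(a,c)$ and $(b,c)$---yields, in the best case (even indices), a lower bound of the shape $m\gg b^{-1/2}c^{1/2}$. Since $c_1^\pm=a+b\pm 2r$ is only of order $a$, this gives $m\gg\sqrt{(k+1)/k}$, a bounded constant, which is useless against the Matveev upper bound $m/\log(m+1)\ll\log^2 c$. Your asserted conclusion $m\gtrsim a^{\nu-1/2}$ does not follow from $m\gtrsim\sqrt{c/a}$ when $\nu=1$ (indeed $\sqrt{c_\nu^\pm/a}\sim a^{\nu-1}$, not $a^{\nu-1/2}$), and for $\nu=1$ the entire argument collapses.

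The paper circumvents this by switching, for $c=c_1^\pm$ and $c_2^\pm$, to the pair of Pellian equations coming from $(a,b)$ and $(a,c)$, i.e.\ to the equality $x=P_l=Q_m$ rather than $z=v_m=w_n$. The gain is that one can now run congruences modulo $a$ (Lemma~3.2, giving $l\gg a^{1/4}$ in Type~$a$) and modulo $r$ (Lemma~3.4, giving $m\gg a$ in Type~$b$ for $c_2^\pm$), both of which are independent of the size of $c$. Only for $c=c_3^\pm$, where $c$ is genuinely large ($\sim a^5$), does the paper revert to the $v_m=w_n$ route you outline. A secondary issue: in your linear form you write $\alpha=(r+\sqrt{ab})/2$ and $\beta=(t+\sqrt{bc})/2$ as the dominant roots of $v_m$ and $w_n$, but the characteristic root of $(v_m)$ is $(s+\sqrt{ac})/2$, not $(r+\sqrt{ab})/2$; the quantity $(r+\sqrt{ab})/2$ enters only once one passes to the $(P_l,Q_m)$ formulation.
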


The organization of this paper is as follows. In Sections \ref{section_2} and \ref{section_3} of the paper, we will essentially prove the useful results to achieve our main goal. We devote Section \ref{section_4} to the proof of Theorem \ref{main result} and we will end in Section \ref{section_5} with some clarifications.

\section{Pellian equations and linear form in three logarithms}\label{section_2}

The goal of this section is to provide and prove the main technical tools used in our proof of Theorem~\ref{main result}. These tools are related to the search for the intersection of linear recurrent sequences and to linear form in logarithms.

\subsection{System of simultaneous Pellian equations}\label{subsec-2.1}

Let us observe an extension of a triple $\{a,b,c\}$ to a quadruple $\{a,b,c,d\}$
\begin{align*}
    ad+4&=x^2,\\
    bd+4&=y^2,\\
    cd+4&=z^2.
\end{align*}
By eliminating $d$ from these equations, we get a system of generalized Pellian equations
\begin{align}
az^2-cx^2&=4(a-c),\label{eqn:pell_ac}\\
bz^2-cy^2&=4(b-c),\label{eqn:pell_bc}\\
ay^2-bx^2&=4(a-b).\label{eqn:pell_ab}
\end{align}
Its solutions $(z,x)$, $(z,y)$ and $(y,x)$ satisfy
\begin{align}
    z\sqrt{a}+x\sqrt{c}&=(z_0\sqrt{a}+x_0\sqrt{c})\left(\frac{s+\sqrt{ac}}{2}\right)^m,\label{eqn:sol_pell_ac}\\
    z\sqrt{b}+y\sqrt{c}&=(z_1\sqrt{a}+y_1\sqrt{c})\left(\frac{t+\sqrt{bc}}{2}\right)^n,\label{eqn:sol_pell_bc}\\
    y\sqrt{a}+x\sqrt{b}&=(y_2\sqrt{a}+x_2\sqrt{b})\left(\frac{r+\sqrt{ab}}{2}\right)^l,\label{eqn:sol_pell_ab}
\end{align}
where $m,n,l$ are nonnegative integers and $(z_0,x_0)$, $(z_1,y_1)$ and $(y_2,x_2)$ are fundamental solutions of these equations.

Any solution to the system satisfies $z=v_m=w_n$, where ${v_m}$ and ${w_n}$ are recurrent sequences defined by
\begin{align*}
&v_0=z_0,\ v_1=\frac{1}{2}\left(sz_0+cx_0\right),\ v_{m+2}=sv_{m+1}-v_{m},\\
&w_0=z_1,\ w_1=\frac{1}{2}\left(tz_1+cy_1 \right),\ w_{n+2}=tw_{n+1}-w_n.
\end{align*}
The initial terms of these sequences are described in the next theorem.

\begin{theorem}\cite[Theorem 1.3]{mbt}\label{tm:inital_terms}
Suppose that $\{a,b,c,d\}$ is a $D(4)$-quadruple with $a<b<c<d$ and that  $w_m$ and  $v_n$  are defined as before. 
\begin{enumerate}
\item[i)] If  the equation $v_{2m}=w_{2n}$ has a solution, then $z_0=z_1$ and $|z_0|=2$ or $|z_0|=\frac{1}{2}(cr-st)$.
\item[ii)] If the equation $v_{2m+1}=w_{2n}$ has a solution, then $|z_0|=t$, $|z_1|=\frac{1}{2}(cr-st)$ and $z_0z_1<0$.
\item[iii)] If the equation $v_{2m}=w_{2n+1}$ has a solution, then $|z_1|=s$, $|z_0|=\frac{1}{2}(cr-st)$ and $z_0z_1<0$.
\item[iv)] If the equation $v_{2m+1}=w_{2n+1}$ has a solution, then $|z_0|=t$, $|z_1|=s$ and $z_0z_1>0$.
\end{enumerate}
Moreover, if $d>d_+$, case $ii)$ cannot occur.
\end{theorem}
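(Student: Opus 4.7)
The plan is to combine size bounds on the fundamental solutions of the two generalized Pell equations involving $c$ with congruence information obtained by reducing the linear recurrences $v_m, w_n$ modulo $c$, and then reconcile the two for each of the four parity cases.

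First I would extract the residues of $v_m$ and $w_n$ modulo $c$ from the recursions. To avoid the halves in the initial terms, I would work with $V_m := 2v_m$ and $W_n := 2w_n$, which satisfy the same recurrences with initial conditions $V_0 = 2z_0$, $V_1 = sz_0 + cx_0$ and $W_0 = 2z_1$, $W_1 = tz_1 + cy_1$. Because $s^2 = ac + 4 \equiv 4 \pmod c$ and $t^2 = bc + 4 \equiv 4 \pmod c$, a direct induction on the index yields
\begin{align*}
V_{2m}   &\equiv 2z_0 \pmod{c}, & V_{2m+1} &\equiv sz_0 \pmod{c},\\
W_{2n}   &\equiv 2z_1 \pmod{c}, & W_{2n+1} &\equiv tz_1 \pmod{c}.
\end{align*}

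Second, I would apply the classical Nagell estimate to the equations $aZ^2 - cX^2 = 4(a-c)$ and $bZ^2 - cY^2 = 4(b-c)$ to bound $|z_0|$ and $|z_1|$ from above. The resulting bounds sit comfortably below $c/2$, so in each parity case the equality $v_m = w_n$, combined with the matching congruence, forces the congruence to become an exact identity. The four cases thereby translate into the linear relations
(i) $z_0 = \pm z_1$, \ (ii) $sz_0 = \pm 2z_1$, \ (iii) $2z_0 = \pm tz_1$, \ (iv) $sz_0 = \pm tz_1$.

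For each relation I would substitute back into the two Pell equations $aZ^2 - cX^2 = 4(a-c)$ and $bZ^2 - cY^2 = 4(b-c)$; the system becomes linear in $(z_0, z_1)$ and has a unique positive solution. Using $s^2 = ac+4$, $t^2 = bc+4$ and $r^2 = ab+4$, one verifies by direct substitution that the candidates genuinely solve the Pell equations: $(Z,X) = (t, r)$ satisfies $aZ^2 - cX^2 = 4(a-c)$, and analogously $(Z,Y) = (s, r)$ satisfies $bZ^2 - cY^2 = 4(b-c)$, while $\bigl(\tfrac{1}{2}(cr - st),\, \tfrac{1}{2}(br - at)\bigr)$ solves the first equation and an analogous pair solves the second. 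This yields precisely the listed values: $|z_0| = t$, $|z_1| = \tfrac{1}{2}(cr - st)$ in (ii); $|z_1| = s$, $|z_0| = \tfrac{1}{2}(cr - st)$ in (iii); $|z_0| = t$, $|z_1| = s$ in (iv); and $|z_0| = |z_1| \in \{2, \tfrac{1}{2}(cr - st)\}$ in (i). The sign conditions $z_0 z_1 \lessgtr 0$ are read off by tracking which branch of the recursion produces a positive extension $d$.

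The main obstacle I anticipate is the Nagell-type step: one must verify that no other fundamental solutions exist below the theoretical bound, which requires a careful enumeration of admissible $(z_0, x_0)$ and $(z_1, y_1)$ up to that bound, ruling out parasitic classes. For the concluding remark, that case (ii) cannot occur when $d > d_+$, I would argue that the particular pair $z_0 = -t$, $z_1 = \tfrac{1}{2}(cr - st)$ already parametrises the sequence of extensions bounded above by $d_+$, so any $d > d_+$ must be produced by one of the remaining three cases.
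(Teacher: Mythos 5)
The paper does not prove this statement at all: it is quoted, with citation, from \cite[Theorem 1.3]{mbt}, so your attempt has to be judged as a reconstruction of that external proof, and as it stands it has genuine gaps at the decisive points. First, your claim that the size bounds force the congruences to become exact identities only works in the even--even case: from $2z_0\equiv 2z_1\pmod{c}$ and the Nagell-type bounds ($|z_0|\le\sqrt{(s-2)(c-a)/a}$, $|z_1|\le\sqrt{(t-2)(c-b)/b}$, both roughly $c^{3/4}$) one indeed gets $z_0=z_1$ (note: not $z_0=\pm z_1$ as you wrote). But in cases (ii)--(iv) the quantities $sz_0$ and $tz_1$ are of size about $c^{5/4}$, far larger than the modulus $c$, so "$sz_0=\pm 2z_1$", "$2z_0=\pm tz_1$", "$sz_0=\pm tz_1$" do not follow from the congruences by size; the actual proof has to work harder (finer congruences and structural information) to pin down $|z_0|=t$, $|z_1|=s$, etc.

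Second, and more seriously, the uniqueness step is missing. Even in case (i), knowing $z_0=z_1$ and the two Pellian equations only tells you that $e:=(z_0^2-4)/c$ is a nonnegative integer with $ae+4$, $be+4$, $ce+4$ all perfect squares; hence either $e=0$ (i.e. $|z_0|=2$) or $\{a,b,e,c\}$ is a $D(4)$-quadruple with $e<c$. Concluding that the only nonzero possibility is $e=d_-$, i.e. $|z_0|=\frac{1}{2}(cr-st)$, is exactly the hard content of the cited theorem: it cannot be obtained by checking that the candidate pairs $(t,r)$, $\bigl(\frac{1}{2}(cr-st),\cdot\bigr)$ satisfy the equations (that only proves existence, not exclusivity), nor by your proposed "careful enumeration of admissible $(z_0,x_0)$ up to the bound", because a priori every irregular extension $0<e<c$ of the triple would produce an admissible fundamental solution inside the Nagell bound; ruling these out requires the structural results about $D(4)$-quadruples established in \cite{mbt}, not a finite search. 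Your statement that the system "becomes linear in $(z_0,z_1)$ and has a unique positive solution" is also false as written --- after substituting any of the linear relations the Pell equations remain quadratic. Finally, the sign conditions $z_0z_1\lessgtr 0$ and the closing assertion that case (ii) cannot occur when $d>d_+$ are merely asserted ("read off by tracking which branch", "already parametrises the extensions bounded by $d_+$") without argument; the latter needs a genuine comparison of the smallest solution $z$ arising in case (ii) with $\sqrt{cd_++4}$, which your sketch never makes.
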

The next result will also be useful in our case.
\begin{lemma}\cite[Lemma 14]{mbt}\label{lem:sequences_shift} 
Let $\{v_{z_0,m}\}$ denote a  sequence $\{v_m\}$ with an initial value $z_0$ and $\{w_{z_1,n}\}$ denote a sequence $\{w_n\}$ with an initial value $z_1$. It holds that $v_{\frac{1}{2}(cr-st),m}=v_{-t,m+1}$, $v_{-\frac{1}{2}(cr-st),m+1}=v_{t,m}$, for each $m\geq 0$ and $w_{\frac{1}{2}(cr-st),n}=w_{-s,n+1}$, $w_{-\frac{1}{2}(cr-st),n+1}=w_{s,n}$, for each $n \geq 0$.
\end{lemma}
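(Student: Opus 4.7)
The plan is to exploit the fact that each of the four claimed equalities is an identity between two solutions of the same second-order linear recurrence. By definition, $v_{z_0,m}$ satisfies $v_{m+2}=sv_{m+1}-v_m$ for every initial pair $(z_0,x_0)$, and $w_{z_1,n}$ satisfies $w_{n+2}=tw_{n+1}-w_n$ for every $(z_1,y_1)$. Consequently, to show $v_{z_0,m}=v_{z_0',m+1}$ for all $m\geq 0$ it suffices to verify the equality at two consecutive indices; induction then propagates it. So the problem reduces to checking four small base cases.

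The cleanest way I would carry out this verification is to pass to the ring $\mathbb{Z}[\sqrt{a},\sqrt{c}]$, where (\ref{eqn:sol_pell_ac}) expresses the $v$-sequence as the $\sqrt{a}$-coordinate of
\[
(z_0\sqrt{a}+x_0\sqrt{c})\left(\tfrac{s+\sqrt{ac}}{2}\right)^{m},
\]
and similarly for the $w$-sequence via (\ref{eqn:sol_pell_bc}) with multiplier $\beta=\frac{t+\sqrt{bc}}{2}$. Then the shifted identity $v_{z_0,m}=v_{z_0',m+1}$ is equivalent to the single algebraic equation
\[
z_0\sqrt{a}+x_0\sqrt{c}\;=\;(z_0'\sqrt{a}+x_0'\sqrt{c})\cdot\tfrac{s+\sqrt{ac}}{2},
\]
which splits into two scalar identities upon comparing $\sqrt{a}$- and $\sqrt{c}$-components.

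For the first identity $v_{\frac12(cr-st),m}=v_{-t,m+1}$ I would first determine the $x_0$ paired with $z_0=-t$ from (\ref{eqn:pell_ac}): substituting gives $cx_0^{2}=at^{2}-4(a-c)=a(bc+4)-4a+4c=cr^{2}$, so $x_0=\pm r$. Choosing the correct sign and expanding $(-t\sqrt{a}+r\sqrt{c})\cdot\frac{s+\sqrt{ac}}{2}$ produces $\frac12(cr-st)\sqrt{a}+\frac12(rs-ta)\sqrt{c}$, so the $\sqrt{a}$-coordinate is exactly the prescribed initial value $\frac12(cr-st)$, and one can check (using $s^{2}=ac+4,\ t^{2}=bc+4,\ r^{2}=ab+4$, which yield the key relation $cr^{2}-at^{2}=4(c-a)$) that the $\sqrt{c}$-coordinate $\frac12(rs-ta)$ indeed satisfies the Pellian equation with $z_0=\frac12(cr-st)$. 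Repeating with $t\to -t$ delivers the second $v$-identity, and the two $w$-identities follow by the entirely parallel computation over $\mathbb{Z}[\sqrt{b},\sqrt{c}]$ with multiplier $\beta$, using instead $bs^{2}=abc+4b$ and $ct^{2}=bct+\ldots$ combined with (\ref{eqn:pell_bc}).

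The main obstacle is purely bookkeeping — the Pellian equation determines $x_0$ (and $y_0$) only up to sign, so one must be careful to pair each $z_0\in\{\pm t,\pm\frac12(cr-st)\}$ with the unique $x_0$ for which the sequence $\{v_m\}$ is the one produced by (\ref{eqn:sol_pell_ac}); otherwise the shifted identity would fail by a sign. Once the signs are fixed consistently, every verification collapses to elementary polynomial manipulations in $a,b,c,r,s,t$ using the three defining relations $ab+4=r^{2}$, $ac+4=s^{2}$, $bc+4=t^{2}$, and no further number-theoretic input is required.
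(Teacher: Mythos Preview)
The paper does not give its own proof of this lemma --- it is quoted verbatim from \cite{mbt} (Lemma~14 there) and stated without argument. Your sketch is a correct and standard way to establish such shift identities: since both sides satisfy the same second-order linear recurrence in $m$ (resp.\ $n$), matching two consecutive terms suffices, and the algebraic-number viewpoint over $\mathbb{Z}[\sqrt{a},\sqrt{c}]$ (resp.\ $\mathbb{Z}[\sqrt{b},\sqrt{c}]$) packages this neatly into a single multiplication by the unit $\tfrac{s+\sqrt{ac}}{2}$ (resp.\ $\tfrac{t+\sqrt{bc}}{2}$). Your key computation $(-t\sqrt{a}+r\sqrt{c})\cdot\tfrac{s+\sqrt{ac}}{2}=\tfrac{1}{2}(cr-st)\sqrt{a}+\tfrac{1}{2}(rs-at)\sqrt{c}$ is right, and the companion value $x_0=\tfrac{1}{2}(rs-at)$ does satisfy \eqref{eqn:pell_ac} with $z_0=\tfrac{1}{2}(cr-st)$, as one checks from $s^{2}-ac=4$ and $at^{2}-cr^{2}=4(a-c)$.

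The only point requiring care is the one you already flag: the notation $v_{z_0,m}$ suppresses the companion $x_0$, which the Pell equation fixes only up to sign, so the lemma is really asserting that with the specific pairings $(z_0,x_0)=(-t,r)$ and $\bigl(\tfrac{1}{2}(cr-st),\tfrac{1}{2}(rs-at)\bigr)$ (and their analogues) the shift holds. One small slip: in your final paragraph the fragment ``$ct^{2}=bct+\ldots$'' is garbled; the identity you want for the $w$-sequence is $bs^{2}-cr^{2}=b(ac+4)-c(ab+4)=4(b-c)$, i.e.\ the companion $y_1$ for $z_1=-s$ is $r$, and then $(-s\sqrt{b}+r\sqrt{c})\cdot\tfrac{t+\sqrt{bc}}{2}=\tfrac{1}{2}(cr-st)\sqrt{b}+\tfrac{1}{2}(rt-bs)\sqrt{c}$.
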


These results prove the next lemma. 

\begin{lemma}\label{lem:inital_terms}
Assume that $\{a,b,c,c'\}$ is not a $D(4)$-quadruple for any $c'$ with $0<c'<c_{\nu-1}^{\pm}$. We have 
\begin{enumerate}[i)]
    \item If the equation $v_{2m}=w_{2n}$ has a solution, then $z_0=z_1=\pm 2$ and $x_0=y_1=2$.
    \item If the equation $v_{2m+1}=w_{2n+1}$ has a solution, then $z_0=\pm t$, $z_1=\pm s$, $x_0=y_1=r$ and $z_0z_1>0$.
\end{enumerate}
\end{lemma}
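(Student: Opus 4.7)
The plan is to combine the classification of initial terms in Theorem \ref{tm:inital_terms} with direct computations using the Pellian equations (\ref{eqn:pell_ac}) and (\ref{eqn:pell_bc}), supplemented by the index-shift identity of Lemma \ref{lem:sequences_shift} to exclude the extraneous subcase of i).

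Part ii) is the easier half. Theorem \ref{tm:inital_terms} iv) already delivers $|z_0|=t$, $|z_1|=s$ and $z_0z_1>0$. To identify $x_0$ and $y_1$, I would substitute $z_0^2=t^2=bc+4$ into (\ref{eqn:pell_ac}); the equation becomes $a(bc+4)-cx_0^2=4(a-c)$, which after cancelling $4a$ rearranges to $cx_0^2=c(ab+4)=cr^2$, so $x_0=r$. The analogous substitution $z_1^2=s^2=ac+4$ in (\ref{eqn:pell_bc}) gives $y_1=r$, completing part ii).

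For part i), Theorem \ref{tm:inital_terms} i) forces $z_0=z_1$ together with $|z_0|\in\{2,\tfrac{1}{2}(cr-st)\}$. In the first subcase $|z_0|=2$, plugging $z_0^2=4$ into (\ref{eqn:pell_ac}) and (\ref{eqn:pell_bc}) gives $x_0^2=y_1^2=4$, and taking the positive representative yields $x_0=y_1=2$, as required.

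It remains to eliminate the subcase $|z_0|=\tfrac{1}{2}(cr-st)$. Here I would apply Lemma \ref{lem:sequences_shift}: the sequences with initial terms $z_0=z_1=\pm\tfrac{1}{2}(cr-st)$ are just the sequences with initial terms $\mp t$ and $\mp s$ re-indexed by one. This re-indexing translates a putative $v_{2m}=w_{2n}$ solution in this subcase into a quadruple $\{a,b,c,d\}$ whose fourth element $d$ can be computed explicitly from the shifted recurrence at small indices; using the identity $(ab+4)(ac+4)(bc+4)=(rst)^2$ one identifies $d$ with $d_{-}(a,b,c)=a+b+c+\tfrac12(abc-rst)$. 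The minimality hypothesis on $c'<c_{\nu-1}^{\pm}$ is then invoked to produce a contradiction.

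The main obstacle is this last step: concretely verifying that the produced $d$ falls strictly inside the forbidden range $0<d<c_{\nu-1}^{\pm}$, rather than merely reaching the boundary $d=c_{\nu-1}^{\pm}$. This requires a careful comparison between the explicit expression for $d_{-}(a,b,c)$ and the formulas defining the $c_\mu^{\pm}$ sequence, with attention to the $\pm$ sign conventions used for $c=c_\nu^{\pm}$. All the other steps reduce to one-line substitutions in the Pellian equations.
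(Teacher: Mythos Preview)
Your handling of part ii) and of the subcase $|z_0|=2$ in part i) is correct and matches the paper's intent: both follow by one-line substitution into (\ref{eqn:pell_ac}) and (\ref{eqn:pell_bc}).

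The gap is in the remaining subcase $|z_0|=\tfrac{1}{2}(cr-st)$ of part i). You reach for the right tool, Lemma \ref{lem:sequences_shift}, but then draw the wrong conclusion from it. The shift $v_{\frac{1}{2}(cr-st),2m}=v_{-t,2m+1}$ and $w_{\frac{1}{2}(cr-st),2n}=w_{-s,2n+1}$ (and similarly with the opposite sign) does not produce a small forbidden $d$; it literally converts the even-even equation with $z_0=z_1=\pm\tfrac{1}{2}(cr-st)$ into the odd-odd equation with initial terms $\mp t,\mp s$, i.e.\ into case ii) of the present lemma. So this subcase is not \emph{eliminated}, it is \emph{absorbed}: any solution appearing there is already listed under ii). That is all the paper means when it says ``these results prove the next lemma.''

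Your alternative route via the minimality hypothesis cannot close, and you have in fact located the reason yourself: for $c=c_\nu^{\pm}$ one has $d_-(a,b,c)=c_{\nu-1}^{\pm}$ exactly, so the value lands on the boundary of the forbidden interval $0<c'<c_{\nu-1}^{\pm}$ and the hypothesis yields nothing. The ``main obstacle'' you flag is not an obstacle to be overcome; it is a sign that this line of argument is the wrong one. Replace that paragraph by the observation that Lemma \ref{lem:sequences_shift} re-indexes this subcase into part ii), and the proof is complete.
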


\begin{remark}\label{c_1-even}
If $c=c_1^{\pm}=a+b\pm 2r,$ then it is enough to observe the case $v_{2m}=w_{2n}$.
\end{remark}

Now, we observe the solutions of the system of equations (\ref{eqn:pell_bc}) and (\ref{eqn:sol_pell_ab}). More precisely, we will determine the intersections $y=A_n=B_l$ of sequences $(A_n)_n$ and $(B_l)_l$ defined by
\begin{align}
    &A_0=y_1,\ A_1=\frac{1}{2}(ty_1+bz_1),\ A_{n+2}=tA_{n+1}-A_n,\label{seq:An}\\
    &B_0=y_2,\ B_1=\frac{1}{2}(ry_2+bx_2),\ B_{l+2}=rB_{l+1}-B_l, \ n,j\geq 0.\label{seq:Bl}
\end{align}

The next lemma, which is a part of Lemma 2\@ in \cite{dujram}, gives us a description of the solutions of Pell equations.
\begin{lemma}\cite[Lemma 2]{dujram}\label{lem:pell_general}
If $(X,Y)$ is a positive integer solution to a generalized Pell equation
$$aY^2-bX^2=4(a-b),$$
with $ab+4=r^2$, we have
$$Y\sqrt{a}+X\sqrt{b}=(y_0\sqrt{a}+x_0\sqrt{b})\left(\frac{r+\sqrt{ab}}{2}\right)^n,$$
where $n\geq0$ is an integer and $(x_0,y_0)$ is integer solution of the equation such that
$$1\leq x_0 \leq \sqrt{\frac{a(b-a)}{r-2}}\quad \textit{and}\quad 1\leq |y_0| \leq \sqrt{\frac{(r-2)(b-a)}{a}}.$$
\end{lemma}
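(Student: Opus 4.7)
The plan is to apply classical reduction theory for generalized Pell equations. Let $\alpha=\frac{r+\sqrt{ab}}{2}$. Since $r^2-ab=4$, one has $\alpha\bar\alpha=1$, so multiplication by $\alpha^{\pm1}$ preserves the norm form $aY^2-bX^2$. Writing $\beta=Y\sqrt a+X\sqrt b$, a direct expansion gives
$$\alpha\beta=\frac{Yr+Xb}{2}\sqrt a+\frac{Xr+Ya}{2}\sqrt b,$$
so multiplication by $\alpha$ corresponds to the map $(X,Y)\mapsto\left(\frac{Xr+Ya}{2},\frac{Yr+Xb}{2}\right)$ and multiplication by $\alpha^{-1}$ to $(X,Y)\mapsto\left(\frac{Xr-Ya}{2},\frac{Yr-Xb}{2}\right)$. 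A preliminary step is to check that these maps send integer solutions to integer solutions, a parity argument using $r^2\equiv ab\pmod 4$ together with the parity constraints the equation imposes on $X,Y$.

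The main step is to iterate $\alpha^{-1}$ starting from an arbitrary positive solution $(X,Y)$ until a fundamental representative is reached. The governing calculation is
$$0<\tfrac{Xr-Ya}{2}<X\iff X^2>\frac{a(b-a)}{r-2},$$
obtained by squaring the strict inequality and substituting $Y^2=(bX^2+4(a-b))/a$ together with $r^2=ab+4$. As long as $X>\sqrt{a(b-a)/(r-2)}$, the transformation $\alpha^{-1}$ therefore strictly decreases the positive integer $X$. The procedure terminates at a pair $(x_0,y_0)$ with $x_0\geq 1$ and $x_0\leq\sqrt{a(b-a)/(r-2)}$, which is the asserted upper bound. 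A symmetric computation for the $Y$-coordinate yields $|y_0|\leq\sqrt{(r-2)(b-a)/a}$, and by construction every positive solution equals $\alpha^n$ times its fundamental representative for some $n\geq 0$.

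The main obstacle is sign bookkeeping, since $\alpha^{-1}$ may switch the sign of the $Y$-coordinate before $X$ has been minimized; consequently, the orbit is not jointly monotone in $(X,Y)$. The remedy, which explains why the lemma bounds $|y_0|$ rather than $y_0$ itself, is to continue reducing in the $X$-direction while allowing $y_0\in\ZZ\setminus\{0\}$ to take either sign. The lower bounds $x_0\geq 1$ and $|y_0|\geq 1$ are then automatic, as $x_0=0$ forces $aY^2=4(a-b)<0$, and $y_0=0$ is excluded for the values of $(a,b)$ treated in the paper. Uniqueness of the fundamental pair follows from the strict monotonicity of $|\beta|$ under $\alpha$.
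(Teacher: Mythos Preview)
The paper does not prove this lemma: it is quoted verbatim from \cite{dujram} (Dujella--Ramasamy), so there is no in-paper proof to compare against. Your descent via the unit $\alpha=\frac{r+\sqrt{ab}}{2}$ is exactly the classical argument behind that cited result.

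One point in your write-up deserves tightening. Your key equivalence
\[
0<\tfrac{Xr-Ya}{2}<X \iff X^2>\frac{a(b-a)}{r-2}
\]
is obtained by squaring $X(r-2)<Ya$, which is only legitimate when $Y>0$; if $Y\le 0$ the right-hand side is nonpositive and the inequality simply fails, so $\alpha^{-1}$ then \emph{increases} $X$. Your ``remedy'' paragraph (``continue reducing in the $X$-direction while allowing $y_0$ to take either sign'') therefore cannot be read literally: once $Y\le 0$ you can no longer push $X$ down with $\alpha^{-1}$. The honest fix is to observe that the integer $n$ at which $X_n$ is minimal is adjacent to the real zero of $Y_n$ (both occur at $\alpha^{2n}\approx -\bar\beta/\beta$), so the descent from a positive solution never meets a state with $Y\le 0$ and $X^2$ still above the bound; one extra application of $\alpha^{-1}$ may flip the sign of $Y$, but by then $X^2\le a(b-a)/(r-2)$ already holds.

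Also, the $|y_0|$ bound is not really a ``symmetric computation'': it follows \emph{directly} from the $x_0$ bound by substituting into $y_0^2=(bx_0^2-4(b-a))/a$ and using $ab-4(r-2)=(r-2)^2$. Stating it that way removes any residual worry about signs.
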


The next lemma is proved as \cite[Lemma 4]{ahpt}.

\begin{lemma}\label{lem:lem_4}
Assume that $\{a, b, c', c\}$ is not a $D(4)$-quadruple for any $c'$ with $0<c'<c^{\pm}_{\nu-1}$ and $b\geq 832824$. Then, $A_{2n}=B_{2l+1}$ has no solution. Moreover, if $A_{2n} = B_{2l}$
then $y_2 = 2$. In other cases, we have $y_2 =\pm 2.$
\end{lemma}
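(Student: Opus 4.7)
My plan is to follow the argument of \cite[Lemma 4]{ahpt} adapted to the families $\{a,ka\}$ with $k\in\{2,3,6\}$. Two ingredients drive the proof: the parametrization of fundamental solutions of the Pellian equation (\ref{eqn:pell_ab}) supplied by Lemma~\ref{lem:pell_general}, and a congruence analysis of the recurrences $(A_n)$ and $(B_l)$ modulo $c$.

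First, I apply Lemma~\ref{lem:pell_general} to (\ref{eqn:pell_ab}). Writing $b=ka$ and using $r^2-ka^2=4$, the bounds of that lemma become
\[ x_2\le a\sqrt{\tfrac{k-1}{r-2}}, \qquad |y_2|\le\sqrt{(r-2)(k-1)}. \]
Parity restrictions arising from reducing (\ref{eqn:pell_ab}) modulo $2$ and $4$, together with $\gcd(r,a)=2$, force $y_2$ and $x_2$ to be even; combined with the size bounds above this leaves only a short explicit list of admissible fundamental pairs, which always contains $(y_2,x_2)=(\pm 2,2)$. The lower bound $b\ge 832824$ is invoked to discard the remaining large candidates that parity alone does not exclude.

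Second, for every admissible $y_2$ I reduce the equation $y=A_n=B_l$ modulo $c$. Since $t^2=bc+4\equiv 4\pmod{c}$, induction on the recurrence (\ref{seq:An}) yields closed-form residues of $A_n \bmod c$ depending only on $n$, $y_1$ and $z_1$, and similarly the recurrence (\ref{seq:Bl}) together with Lemma~\ref{lem:inital_terms} controls $B_l \bmod c$. Separating by the parity of $n$ and $l$, the comparison yields: in the case $A_{2n}=B_{2l+1}$ the two sides lie in disjoint residue classes modulo $c$, so no solution exists; in the case $A_{2n}=B_{2l}$ matching residues pins down the sign of $y_2$ to be $+2$; in the cases $A_{2n+1}=B_{2l}$ and $A_{2n+1}=B_{2l+1}$ the comparison forces only $|y_2|=2$, without fixing the sign. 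The assumption that no extension $c'<c_{\nu-1}^\pm$ exists is used throughout to restrict the admissible values of $(y_1,z_1)$ via Lemma~\ref{lem:inital_terms}.

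I expect the main obstacle to be the finite case analysis of the first step: discarding every fundamental pair with $|y_2|>2$ requires exploiting both the explicit description of $r$ via (\ref{sequence-a_p}) and the lower bounds on $a$ inherited from Lemma~\ref{lower_on_b}, separately for each $k\in\{2,3,6\}$. Once those pairs are eliminated, the congruence bookkeeping of the second step is essentially mechanical and follows the same pattern as in \cite{ahpt}.
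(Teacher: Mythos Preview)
There is a genuine gap in your second step. You propose to reduce $y=A_n=B_l$ modulo $c$, arguing that $t^2\equiv 4\pmod{c}$ controls $A_n\bmod c$. That is fine for the sequence $(A_n)$, whose recurrence multiplier is $t$; but $(B_l)$ satisfies $B_{l+2}=rB_{l+1}-B_l$, and $r^2=ab+4$ bears no useful relation to $c$. Hence there is no closed form for $B_l\bmod c$, and Lemma~\ref{lem:inital_terms} (which only pins down the initial data $z_0,z_1,x_0,y_1$ of the sequences $(v_m),(w_n)$) does not help here. The correct modulus is $b$: because $t^2=bc+4\equiv 4\pmod b$ \emph{and} $r^2=ab+4\equiv 4\pmod b$, both recurrences collapse modulo $b$, giving
\[
A_{2n}\equiv y_1,\quad B_{2l}\equiv y_2,\quad B_{2l+1}\equiv \tfrac12(ry_2+bx_2)\pmod b.
\]
Together with $y_1=2$ from Lemma~\ref{lem:inital_terms}, these are the congruences that drive the case analysis.

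Your first step is also inverted. The bound $|y_2|\le\sqrt{(k-1)(r-2)}$ from Lemma~\ref{lem:pell_general} is of order $\sqrt{b}$, so for $b$ in the stated range it still admits several hundred even values of $y_2$; parity plus the lower bound on $b$ cannot by themselves cut this down to a ``short explicit list'' (a larger $b$ makes the list longer, not shorter). The argument must run the other way: one first obtains the congruence modulo $b$, and only then uses the size bound to convert it into an equality. For example, $A_{2n}=B_{2l}$ gives $y_2\equiv 2\pmod b$, and then $|y_2|<1.43\sqrt{b}<b/2$ forces $y_2=2$. In the mixed case $A_{2n}=B_{2l+1}$ one deduces $y_2\equiv r\pmod{b/g}$ with $g=\gcd(b,r)\in\{1,2,4\}$, and comparing $|y_2|\le 0.003b$ against $r\approx b/\sqrt{k}$ yields the contradiction.
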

\begin{proof}
It is straightforward to check by induction that
\begin{align*}
    A_{2n}\equiv y_1\ (\bmod \ b),\quad &A_{2n+1}\equiv \frac{1}{2}(ty_1+bz_1)\ (\bmod \ b),\\
    B_{2l}\equiv y_2\ (\bmod \ b),\quad &B_{2l+1}\equiv \frac{1}{2}(ry_2+bx_2)\ (\bmod \ b).
\end{align*}
From Lemma \ref{lem:pell_general}, we have
$$|y_2|\leq \sqrt{\frac{(r-2)(b-a)}{a}}=\sqrt{\frac{(r-2)(k-1)a}{a}}=\sqrt{(k-1)(r-2)},$$
where we have used that $b=2k$, $k\in\{2,3,6\}$. This implies
$$|y_2|\leq \sqrt{(k-1)\sqrt{b^2/k+4}}\leq \begin{cases}0.85\sqrt{b},&\ k=2,\\
1.075\sqrt{b},&\ k=3,\\
1.43\sqrt{b},&\ k=6.\end{cases}$$
 \par

\textbf{Case 1: } If $A_{2n}=B_{2l}$ then $y_1\equiv y_2 \pmod  b$. From Lemma \ref{lem:inital_terms}, we have $y_1=2$, so $y_2\equiv 2\ (\bmod \ b)$. On the other hand, $y_2<1.43\sqrt{b}<0.5b$, for $b\geq 9$ so $y_2=2.$

\textbf{Case 2: } If $A_{2n}=B_{2l+1}$ we have $y_1=2$ and 
\begin{equation}\label{eqn:cong1}
2\equiv \frac{1}{2}(ry_2+bx_2)\ (\bmod \ b).
\end{equation}
Since $ab+4=r^2$, we know that $g=\gcd(b,r)\in\{1,2,4\}$. After multiplying congruence (\ref{eqn:cong1}) by $2$ and using the fact that $r^2\equiv 4 \ (\bmod \ b)$, we can divide the final congruence by $r$ and we get
\begin{equation}\label{eqn:cong2}
    r\equiv y_2\ \left(\bmod \ \frac{b}{g}\right).
\end{equation}
We can rewrite the upper bound on $|y_2|$ and get
$$|y_2|\leq  \begin{cases}0.002b,&\ k=2,\\
0.0022b,&\ k=3,\\
0.003b,&\ k=6,\end{cases}$$
where we have used a lower bound on $b$ for each value of $k$. Also, $r=\sqrt{ab+4}=\sqrt{b^2/k+4}=\sqrt{1/k+4/b^2}b$, so we can use lower bounds of $b$ to get those of $r$ in the terms of $b$. More precisely, $R_0 \cdot b<r<R_0+0.01\cdot b$, where 
$$\begin{cases} R_0=0.7,&\ k=2,\\
R_0=0.57,&\ k=3,\\
R_0=0.4,&\ k=6.\end{cases}$$
If $g=\gcd(b,r)=1$, for each $k$, we have $||y_2|-r|0.4b$, which is a contradiction with a lower bound for $|y_2|$. On the other hand, if $g=\gcd(b,r)\in\{2,4\}$ we have that $y_2=r+\frac{b}{4}\cdot p$, for some $p\in\mathbb{Z}$. If $p\geq 0$, we have $|y_2|\geq r$ and a contradiction as in the previous case. If $p\leq -4$, we have $|y_2|> 0.29b$. For the remaining cases, $p=-1,-2,-3$ we have $|y_2|> 0.13b,0.09b,0.04b$, which is a contradiction in each case. 

Other possibilities for the parities of the indices of the sequences $\{A_n\}$ and $\{B_l\}$ are solved similarly to \cite[Lemma 4]{ahpt}. So, we omit the details.
\end{proof}
Therefore, the fundamental solutions of equation \eqref{eqn:pell_ab} are $(y_2, x_2)=(\pm 2, 2).$ Finally, we need to look at $x=Q_m=P_l$, for some non-negative integers $m$ and $l,$ where the sequences $(Q_m)_{m\ge 0}$ and $(P_l)_{l\ge 0}$ are obtained using \eqref{eqn:sol_pell_ac} and \eqref{eqn:sol_pell_ab} and given by
\begin{align}
    &P_0=x_2,\ P_1=\frac{1}{2}\left(rx_2+ay_2 \right) ,\ P_{l+2}=rP_{l +1}-P_{l},\label{sequence_P_l}\\
    &Q_0= x_0,\ Q_1=\frac{1}{2}\left(sx_0+az_0 \right),\ Q_{m+2}=sQ_{m +1}-Q_{m}.\label{sequence_Q_m}
\end{align}
From the above, we conclude that for the equation $x=P_l=Qm,$ the following
two possibilities exist:\\
\textbf{Type $a:$} If $l\equiv m\equiv 0\pmod{2},$ then $z_0=\pm 2,$ $x_0=y_2=2$ and $x_2=2.$\\
\textbf{Type $b:$} If $m\equiv 1\pmod{2},$ then $z_0=\pm t,$ $x_0=r,$ $y_2=\pm 2$ and $x_2=2.$

For the rest of this study, we will carefully examine the following equation 
\begin{equation}\label{eq-Q_m=P_l}
x=Q_m=P_l,
\end{equation}
while using the fundamental solutions of Types $a$ and $b.$ As we mentioned in Remark \ref{c_1-even}, we only need to consider solutions in Type $a$ if $c=c_1^\pm$ since $\dfrac{1}{2}(cr-st)=\dfrac{1}{2}\left((a+b\pm 2r)r-(r\pm a)(b+\pm r) \right)=\pm 2.$

\subsection{A linear form in three logarithms}\label{subsec-2.2}
Solving recurrences \eqref{sequence_P_l} and \eqref{sequence_Q_m}, we obtain
\begin{align}
\notag P_l&=\frac{1}{2\sqrt{b}}\left((y_2\sqrt{a}+x_2\sqrt{b})\alpha^l-(y_2\sqrt{a}-x_2\sqrt{b})\alpha^{-l} \right),\\
\notag Q_m&=\frac{1}{2\sqrt{c}}\left((z_0\sqrt{a}+x_0\sqrt{c})\beta^m-(z_0\sqrt{a}-x_0\sqrt{c})\beta^{-m} \right),
\end{align} 
where 
\begin{equation}\label{alpha-beta}
\alpha=\dfrac{r +\sqrt{ab}}{2} \quad \mbox{and}\quad \beta=\dfrac{s+\sqrt{ac}}{2}.
\end{equation}
Let us define 
\begin{equation}
\gamma=\frac{\sqrt{c}(y_2\sqrt{a} + x_2\sqrt{b})}{\sqrt{b}(z_0\sqrt{a}+x_0\sqrt{c})}.
\end{equation}
We follow the strategy used in \cite{ahpt} with some improvements and define the following linear form in three logarithms 
\begin{equation}\label{Lambda}
\Lambda = l\log \alpha - m\log \beta + \log \gamma.
\end{equation}

\begin{lemma}\label{bound-Lambda}
\begin{enumerate}[1)]
  \item If the equation $P_l=Q_m$ has a solution $(l,m)$ of Type a with $m\ge 1$, then
$$
0<\Lambda < 11.7 \beta^{-2m}.
$$  
   \item If the equation $P_l=Q_m$ has a solution $(l,m)$ of Type b with $m\ge 1$, then
$$
0<\Lambda < 4.4a^2 \beta^{-2m}.
$$
\end{enumerate}
\end{lemma}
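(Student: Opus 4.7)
\medskip

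The plan is to start from the closed forms for $P_l$ and $Q_m$, equate them, and isolate the ``large'' pieces from the ``small'' pieces so that $e^{\Lambda}-1$ is expressed purely in terms of $\alpha^{-l}$ and $\beta^{-m}$. Writing $A=y_2\sqrt{a}+x_2\sqrt{b}$, $A'=y_2\sqrt{a}-x_2\sqrt{b}$, $B=z_0\sqrt{a}+x_0\sqrt{c}$, $B'=z_0\sqrt{a}-x_0\sqrt{c}$, the equation $P_l=Q_m$ rearranges to
\[
\sqrt{c}\,A\,\alpha^{l}-\sqrt{b}\,B\,\beta^{m}=\sqrt{c}\,A'\alpha^{-l}-\sqrt{b}\,B'\beta^{-m}.
\]
Dividing by $\sqrt{b}\,B\,\beta^{m}$ gives
\[
e^{\Lambda}-1=\frac{\sqrt{c}\,A'}{\sqrt{b}\,B}\,\alpha^{-l}\beta^{-m}-\frac{B'}{B}\,\beta^{-2m}.
\]

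Next I would record the two algebraic identities $A\cdot A'=4(a-b)$ and $B\cdot B'=4(a-c)$; the second holds in Type~b as well since $t^{2}a-r^{2}c=(bc+4)a-(ab+4)c=4(a-c)$. Combined with the easily checked sign statements $A>0$, $B>0$, $A'<0$, $B'<0$ (valid for both types under $a<b<c$), this gives
\[
e^{\Lambda}-1=\frac{4(c-a)}{B^{2}}\beta^{-2m}-\frac{\sqrt{c}\,|A'|}{\sqrt{b}\,B}\alpha^{-l}\beta^{-m}.
\]
To prove positivity $\Lambda>0$, I would show that the positive piece dominates. Using the equality $\sqrt{c}\,A\,\alpha^{l}-\sqrt{b}\,B\,\beta^{m}=\sqrt{c}\,A'\alpha^{-l}-\sqrt{b}\,B'\beta^{-m}$, one estimates $\alpha^{l}\leq(1+\varepsilon)\sqrt{b}B/(\sqrt{c}A)\cdot \beta^{m}$ from the trivial bounds on $P_l$ and $Q_m$; the comparison then reduces (to leading order) to $b(c-a)>c(b-a)$, which is equivalent to $c>b$ and is granted. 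A little care with the error term $|A'|\sqrt{c}\,\beta^{-m}$, which is genuinely negligible because $m\geq 1$ forces $\beta^{m}\geq\beta\gtrsim\sqrt{ac}$ to be very large under the lower bound on $b$ from Lemma~\ref{lower_on_b}, closes the argument.

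For the upper bound I would use $\Lambda<e^{\Lambda}-1$ and simply drop the negative term, obtaining
\[
0<\Lambda<\frac{4(c-a)}{B^{2}}\beta^{-2m}.
\]
In Type~a we have $B=\pm2\sqrt{a}+2\sqrt{c}$, so the worst case is $B=2(\sqrt{c}-\sqrt{a})$ and $4(c-a)/B^{2}=(\sqrt{c}+\sqrt{a})/(\sqrt{c}-\sqrt{a})$; combined with the admissible range $c\geq c_{1}^{\pm}$ and the numerical lower bounds on $a$ following from Lemma~\ref{lower_on_b}, this produces an absolute constant, aimed at $11.7$. In Type~b the worst case is $B=r\sqrt{c}-t\sqrt{a}$, for which $(r\sqrt{c}-t\sqrt{a})(r\sqrt{c}+t\sqrt{a})=r^{2}c-t^{2}a=4(c-a)$, whence
\[
\frac{4(c-a)}{B^{2}}=\frac{(r\sqrt{c}+t\sqrt{a})^{2}}{4(c-a)},
\]
and expanding $(r\sqrt{c}+t\sqrt{a})^{2}=2abc+4(a+c)+2rt\sqrt{ac}\approx 4abc$ together with $b=ka$ produces the factor $\sim ka^{2}$, which after careful numerical work with the explicit lower bounds on $a,b,c$ should sharpen to $4.4a^{2}$.

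The main technical obstacle is the Type~b constant: because $|B|$ can be as small as $O(\sqrt{c}/a)$, the estimate is very sensitive, and one must squeeze $(r\sqrt{c}+t\sqrt{a})^{2}/(4(c-a))$ tightly using the true sizes of $r,t$ rather than crude $\sqrt{ab},\sqrt{bc}$ asymptotics. The Type~a analysis is routine by comparison, but also requires keeping track of both signs of $z_{0}$ (since $z_{0}=-2$ gives the larger bound) and of both signs for $c_{1}^{\pm}$ after the relabelings described just before the statement of Theorem~\ref{main result}.
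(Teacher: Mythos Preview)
Your overall strategy is right and is essentially the paper's: write $P_l=Q_m$ as an identity $E+4\frac{b-a}{b}E^{-1}=F+4\frac{c-a}{c}F^{-1}$ (your $(E,F)=(\sqrt{c}A\alpha^l/\sqrt{b},B\beta^m/\sqrt{c}\,)$ up to the common $\sqrt{c}$), use $c>b$ to get $E>F$ and hence $\Lambda>0$, and bound $\Lambda<4F^{-2}$. For Type~a this gives $\Lambda<\dfrac{c}{(\sqrt c\pm\sqrt a)^2}\beta^{-2m}\le \dfrac{k}{(\sqrt k-1)^2}\beta^{-2m}<11.7\,\beta^{-2m}$, just as in the paper.

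There is, however, a genuine gap in Type~b, precisely at the point you flag as ``the main technical obstacle''. Dropping the negative term yields
\[
\Lambda<\frac{4(c-a)}{B^2}\beta^{-2m}=\frac{(r\sqrt c+t\sqrt a)^2}{4(c-a)}\beta^{-2m},
\]
and this cannot be sharpened to $4.4a^2$. Indeed $rt>\sqrt{ab\cdot bc}=b\sqrt{ac}$ gives $(r\sqrt c+t\sqrt a)^2>4abc$, hence
\[
\frac{(r\sqrt c+t\sqrt a)^2}{4(c-a)}>\frac{4abc}{4c}=ab=ka^2,
\]
which for $k=6$ already exceeds $6a^2>4.4a^2$. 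No ``careful numerical work'' with the true sizes of $r,t$ can overturn this strict lower bound; your hoped-for sharpening to $4.4a^2$ is simply false for $k\in\{5,6\}$.

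What the paper does in the bad sub-case $z_0=-t$ is \emph{not} to drop the negative term but to exploit cancellation. From $E-F=4\frac{c-a}{c}F^{-1}-4\frac{b-a}{b}E^{-1}$ one first shows $F>4(c-a)$ for $m\ge 3$, so that $\frac{c-a}{c}F^{-1}<\frac{c-a}{c}\bigl(E-4\frac{c-a}{c}\bigr)^{-1}<E^{-1}$, and hence
\[
E-F<4E^{-1}-4\frac{b-a}{b}E^{-1}=\frac{4a}{b}E^{-1}=\frac{4}{k}E^{-1}<\frac{4}{k}F^{-1}.
\]
This gains the crucial factor $1/k$, and rationalizing as you do then gives
\[
\Lambda<\frac{4}{k}\cdot\frac{c}{(r\sqrt c-t\sqrt a)^2}\beta^{-2m}<\frac{r^2c^2}{k(c-a)^2}\beta^{-2m}<\frac{1.1k^2}{(k-1)^2}\,a^2\beta^{-2m},
\]
which is maximized at $k=2$ and equals $4.4a^2$. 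The paper disposes of $m=1$ separately (it yields no solution). So the missing idea is precisely this cancellation step; without it you can only reach a constant of order $ka^2$, which suffices for the downstream Matveev/reduction arguments (the constant enters only through a logarithm) but does not prove the lemma as stated.
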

\begin{proof}
We define 
$$
E=\frac{y_2\sqrt{a}+x_2\sqrt{b}}{\sqrt{b}}\alpha^{l}\quad \mbox{and} \quad
F=\frac{z_0\sqrt{a}+x_0\sqrt{c}}{\sqrt{c}}\beta^{m}.
$$
Easily $E,\,F>1$ if $l, m\ge 1$. Then, the equation $P_l=Q_m$ becomes
\begin{align}\label{ZU3}
E + 4\left(\frac{b-a}{b}\right)E^{-1} = F + 4\left(\frac{c-a}{c}\right)F^{-1}.
\end{align}
Because $c>b> 10^5$, we have $\frac{c-a}{c}>\frac{b-a}{b}$.
It follows that
\begin{equation}\label{eq:P-Q}
E + 4\left(\frac{b-a}{b}\right)E^{-1}>F + 4\left(\frac{b-a}{b}\right)F^{-1}
\end{equation}
and hence
$$
(E-F)\left(EF-4\left(\frac{b-a}{b}\right)\right) >0.
$$
Therefore, $E>F$. Moreover, by \eqref{ZU3}
we have
$$
0<E-F<4\left(\frac{c-a}{c}\right)F^{-1}<4F^{-1}.
$$
It follows that $\Lambda>0$, with
$$
\Lambda = \log
\frac{E}{F}=\log\left(1+\frac{E-F}{F}\right)<\frac{E-F}{F}<4F^{-2}.
$$

\textbf{Type $a$:}
\begin{align*}
\Lambda &<4\frac{c}{(\pm 2\sqrt{a}+2\sqrt{c})^2}\beta^{-2m}=\frac{c}{(\pm \sqrt{a}+\sqrt{c})^2}\beta^{-2m}\\
&<\dfrac{k}{(\sqrt{k}-1)^2} \beta^{-2m}<11.7\beta^{-2m},\; \text{for}\; c>b=ka> 10^5.
\end{align*}
Note that the above bound also holds for the $D(4)$-triple of the form $\{c_1^-, a, b\}$ and $\{a, c_1^-, b\}.$ 

\textbf{Type $b$:}\\
$\bullet$ The case $z_0=t.$ We have
$$
\Lambda <4\frac{c}{( t\sqrt{a}+r\sqrt{c})^2}\beta^{-2m}<\frac{4}{r^2}\frac{c}{( \sqrt{a}+\sqrt{c})^2}\beta^{-2m}<\beta^{-2m},
$$
for $c>b> 10^5$ and $r>100$.\\
$\bullet$ The case $z_0=-t.$ From \eqref{ZU3}, we get 
\begin{align*}
F=E + 4\left(\frac{b-a}{b}\right)E^{-1} - 4\left(\frac{c-a}{c}\right)F^{-1}
&>E-4\left(\frac{c-a}{c}\right)F^{-1}\\
&>E-4\left(\dfrac{c-a}{c}\right)>0.
\end{align*}
The above inequality comes from the fact that $F>1.$ Thus, we obtain
$$
F^{-1}<\left( E-4\left( \dfrac{c-a}{c}\right)\right)^{-1}.
$$
Therefore, we have
\begin{align}\label{Zu3}
\nonumber E-F&=4\left(\frac{c-a}{c}\right)F^{-1} - 4\left(\frac{b-a}{b}\right)E^{-1}\\
   &<4\left(\dfrac{c-a}{c} \right)\left( E-4\left( \dfrac{c-a}{c}\right)\right)^{-1}-4\left(\dfrac{b-a}{b}\right)E^{-1}.
\end{align}
Moreover, in Type b and for $m\ge 3,$ we have 
\begin{align}\label{Zu4}
\nonumber F&\ge \dfrac{r\sqrt{c}-t\sqrt{a}}{\sqrt{c}}\beta^3=\dfrac{4(c-a)}{\sqrt{c}(r\sqrt{c}+t\sqrt{a})}\cdot\left( \dfrac{s+\sqrt{ac}}{2}\right)^3\\
&>\dfrac{4(c-a)}{\sqrt{c}\cdot 2r\sqrt{c}}\cdot (\sqrt{ac})^3>4(c-a),
\end{align}
which implies $E>F>4(c-a)$ and then 
\begin{align}\label{Zu5}
\dfrac{c-a}{c}\left( E-4\left( \dfrac{c-a}{c}\right) \right)^{-1}<E^{-1}.
\end{align}
When $m=1,$ we can easily see that  $P_l\neq Q_1$ by following what is done in the proof of \cite[Lemma 3.3]{aft}. So, combining \eqref{Zu3} and \eqref{Zu5}, we obtain
$$
E-F<4E^{-1}-4\left(\dfrac{b-a}{b} \right)E^{-1}=\dfrac{4a}{b}E^{-1}<\dfrac{4}{k}F^{-1}.
$$
Therefore, one can see that
$$
\Lambda=\log \dfrac{E}{F}=\log\left(1+\dfrac{E-F}{F} \right)<\dfrac{E-F}{F}<\dfrac{4}{k}F^{-2}
$$
and 
\begin{align*}
\dfrac{4}{k}F^{-2}=\dfrac{4}{k}\cdot \dfrac{c}{(r\sqrt{c}-t\sqrt{a})^2}\beta^{-2m}<\dfrac{c^2r^2}{k(c-a)^2}\beta^{-2m}.
\end{align*}
Using the fact that $c>b=ka>10^5,$ we get 
$$
r^2=ka^2+4<1.1ka^2\quad \text{and}\quad \dfrac{c^2}{k(c-a)^2}<\dfrac{k}{(k-1)^2}.
$$
Hence, $\Lambda=\log\dfrac{E}{F}<4.4a^2\beta^{-2m}.$ Considering all cases in Type $b,$ we have $\Lambda<4.4a^2\beta^{-2m}$. This completes the proof of Lemma~\ref{bound-Lambda}.
\end{proof}

We now have the next result whose proof is similar to that of part 2) in \cite[Lemma 8]{ahpt}.
\begin{lemma}
If the equation $x=Q_m=P_l$ has a solution $(l,m)$ with $m\ge 3,$ then $m\le l.$
\end{lemma}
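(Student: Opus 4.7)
The plan is to argue by contradiction: assume $l\le m-1$ and derive a contradiction from $P_l=Q_m$ by comparing the growth rates $P_l\sim\alpha^l$ and $Q_m\sim\beta^m$, using that $\beta>\alpha$ strictly, since $c>b$ forces $s>r$ and hence $\beta=(s+\sqrt{ac})/2>(r+\sqrt{ab})/2=\alpha$.

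The key ingredient is already available: the inequality $E>F$ established in the proof of Lemma~\ref{bound-Lambda}, where
\[
E=\frac{y_2\sqrt{a}+x_2\sqrt{b}}{\sqrt{b}}\,\alpha^l, \qquad F=\frac{z_0\sqrt{a}+x_0\sqrt{c}}{\sqrt{c}}\,\beta^m.
\]
Rearranged, this reads $\alpha^l>C\,\beta^m$ with
\[
C:=\frac{(z_0\sqrt{a}+x_0\sqrt{c})\sqrt{b}}{(y_2\sqrt{a}+x_2\sqrt{b})\sqrt{c}}>0,
\]
so taking logarithms gives
\[
l>m\,\frac{\log\beta}{\log\alpha}+\frac{\log C}{\log\alpha}.
\]
The conclusion $l\ge m$ then reduces to checking the very mild inequality $C\,(\beta/\alpha)^m\ge 1/\alpha$, which is easily verified once one uses the lower bounds on $a,b$ from Lemma~\ref{lower_on_b} and a lower estimate for $\beta/\alpha$ (essentially $\sqrt{c/b}$, up to a harmless correction).

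In Type~$a$ --- the only type that appears when $c=c_1^{\pm}$ by Remark~\ref{c_1-even} --- the fundamental solutions satisfy $(y_2,x_2)=(z_0,x_0)=(\pm 2,2)$, so $C$ is explicit and bounded below by a constant depending only on $k$, and the required inequality follows by a direct numerical comparison. In Type~$b$ the relevant triples have $c\in\{c_2^{\pm},c_3^{\pm}\}$, so $c$ is much larger than $b$ and $\beta/\alpha$ is comfortably larger than $1$. When $z_0=+t$, the numerator $t\sqrt{a}+r\sqrt{c}$ is large and already $C>1$, making the conclusion immediate.

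The main obstacle is the Type~$b$ subcase $z_0=-t$, in which $r\sqrt{c}-t\sqrt{a}$ is small and $C$ may be small, so the direct estimate via $C$ alone is too weak. Here I would bypass it by invoking the sharper lower bound $F\ge 4(c-a)$ for $m\ge 3$ derived in display~(\ref{Zu4}) within the proof of Lemma~\ref{bound-Lambda}, together with the elementary upper bound $P_l\le 2\alpha^l+2$ that follows directly from the closed form of $P_l$. The substantial size of $c$ in $c_2^{\pm}$ or $c_3^{\pm}$ then forces $Q_m>P_{m-1}\ge P_l$, contradicting $P_l=Q_m$, and the proof is complete.
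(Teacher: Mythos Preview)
Your overall strategy is the right one and matches the standard argument (which the paper defers to \cite[Lemma 8]{ahpt}): use $E>F$ to get $\alpha^l>C\beta^m$, then observe that $\beta>\alpha$ forces $l\ge m$. For Type~$a$ and for Type~$b$ with $z_0=+t$ your sketch is fine: $C$ is bounded below by an absolute constant (or is even $>1$), while $1/\alpha$ is tiny, so $l\le m-1$ would give $C<\alpha^{m-1}/\beta^m<1/\beta$, a contradiction.

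The gap is in the Type~$b$ subcase $z_0=-t$. You abandon the inequality $C(\beta/\alpha)^m\ge 1/\alpha$ and instead invoke the bound $F\ge 4(c-a)$ from \eqref{Zu4} together with $P_{m-1}\le 2\alpha^{m-1}+2$, concluding ``$Q_m>P_{m-1}$''. But $4(c-a)$ is a \emph{fixed} number (it does not grow with $m$), whereas $P_{m-1}$ grows like $\alpha^{m-1}$. For $c=c_2^{\pm}$ one has $c\asymp \alpha^3$, so your comparison $2(c-a)>2\alpha^{m-1}+2$ already fails once $m\ge 5$; the argument as written proves nothing for such $m$.

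The fix is simply not to take the detour. Your own reduction $C(\beta/\alpha)^m\ge 1/\alpha$ works in this case too: since $m\ge 3$ and $\beta/\alpha>1$, it suffices to check $C\beta^3\ge \alpha^2$. A crude estimate gives
\[
C=\frac{(r\sqrt{c}-t\sqrt{a})\sqrt{b}}{(y_2\sqrt{a}+2\sqrt{b})\sqrt{c}}
>\frac{4(c-a)\sqrt{b}}{2r\sqrt{c}\cdot 4\sqrt{b}\cdot\sqrt{c}}
=\frac{c-a}{2rc},
\]
while $\beta^3>(ac)^{3/2}$ and $\alpha^2<r^2<2ab$, so
\[
\frac{C\beta^3}{\alpha^2}>\frac{(c-a)(ac)^{3/2}}{4rc\cdot ab}\gg\Bigl(\frac{c}{b}\Bigr)^{3/2},
\]
which for $c\in\{c_2^{\pm},c_3^{\pm}\}$ is enormous (of order at least $a^3$). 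Equivalently, keep the full $\beta^m$-dependence of $F$ rather than collapsing it to the constant $4(c-a)$; then $Q_m>F/2=\tfrac{r\sqrt{c}-t\sqrt{a}}{2\sqrt{c}}\beta^m$ does dominate $P_{m-1}\le 2\alpha^{m-1}+2$ uniformly in $m\ge 3$, for the same reason.
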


For any nonzero algebraic number $\alpha$ of degree $d$ over $\QQ$, whose minimal polynomial over $\ZZ$ is $a_0 \prod_{j=1}^{d}(X-\alpha^{(j)}),$ we denote by 
$$ 
 h(\alpha)= \frac{1}{d}\left(\log |a_0|+\sum_{j=1}^{d}\log\max\left(1,\left|\alpha^{(j)} \right| \right) \right)
$$ 
its absolute logarithmic height. We recall the following result due to Matveev \cite{Matveev:2000}.
\begin{lemma}\label{Matveev}
Denote by $\alpha_1,\ldots,\alpha_j$ algebraic numbers, not $0$ or $1,$ by $\log\alpha_1,\ldots,$ $\log\alpha_j$ determinations of their logarithms, by $D$ the degree over $\QQ$ of the number field $\KK=\QQ(\alpha_1,\ldots,\alpha_j),$ and by $b_1,\ldots,b_j$ integers. Define $B=\max\{|b_1|,\ldots,|b_j|\},$ and $A_i=\max\{Dh(\alpha_i),|\log\alpha_i|,0.16\}$ ($1\leq i\leq j$), where $h(\alpha)$ denotes the absolute logarithmic Weil height of $\alpha.$ Assume that the number 
$$
\Lambda= b_1\log\alpha_1+\cdots+b_n\log\alpha_j 
$$ 
does not vanish; then 
$$ 
|\Lambda|\geq \exp\{-C(j,\chi)D^2A_1\cdots A_j\log(eD)\log(eB)\},
$$ 
where $\chi=1$ if $\KK \subset\RR$ and $\chi=2$ otherwise and 
$$ 
C(j,\chi)=\min\left\{\frac{1}{\chi}\left( \frac{1}{2}ej\right)^{\chi}30^{j+3}j^{3.5},2^{6j+20}\right\}.
$$
\end{lemma}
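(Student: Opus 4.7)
The lemma reproduced here is Matveev's 2000 theorem on lower bounds for linear forms in logarithms of algebraic numbers, cited verbatim from \cite{Matveev:2000}. My plan is therefore simply to invoke the original paper; I do not attempt an independent proof, and none is needed for the arguments that follow in this work.

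For context on Matveev's approach: the proof fits within the classical Baker--Gel'fond framework for transcendence bounds. Under the hypothesis that $|\Lambda|$ is very small, one constructs an auxiliary multivariable exponential polynomial in $z_1,\ldots,z_j$ whose rational integer coefficients are produced via Siegel's lemma so that this polynomial and a controlled number of its partial derivatives vanish at many integer points of a grid in $\ZZ^j$. An analytic upper bound for the polynomial at an auxiliary interpolation point, obtained via the maximum modulus principle together with the assumption that $|\Lambda|$ is tiny, is then played off against an arithmetic lower bound coming from the fact that a nonzero algebraic integer has absolute norm at least $1$; the two estimates would contradict each other unless $|\Lambda|$ is bounded below by the stated exponential expression.

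The main technical obstacle is a sharp zero multiplicity estimate on commutative algebraic groups (after Philippon), which is what allows Matveev to secure the polynomial dependence $D^2$ on the degree of the number field, rather than the larger $D^{j+2}$ appearing in older bounds of Baker--Feldman--Waldschmidt type; the explicit constant $C(j,\chi)$ and its bookkeeping $\min\{\ldots,2^{6j+20}\}$ then come out of a careful optimization of the free parameters in the auxiliary construction. In the remainder of the paper the lemma will be applied with $j=3$, algebraic numbers $\alpha,\beta,\gamma$ from~\eqref{Lambda}, and integer coefficients $(b_1,b_2,b_3)=(l,-m,1)$, and matched against the upper bound $\Lambda<4.4a^2\beta^{-2m}$ of Lemma~\ref{bound-Lambda} to yield an explicit upper bound on $m$.
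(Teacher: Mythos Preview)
Your proposal is correct and matches the paper's treatment exactly: the paper introduces this lemma with ``We recall the following result due to Matveev \cite{Matveev:2000}'' and gives no proof, simply citing the original source. Your additional expository sketch of the Baker--Gel'fond--Philippon framework behind Matveev's constant is accurate and perhaps useful context, but it goes beyond what the paper itself provides.
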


\begin{proposition}\label{proposition-1}
Assume that $c\in \{c_1^+, c_2^\pm\}.$ If $Q_m=P_l,$ then
\begin{align*}
\frac{l}{\log(el)}&< 3.36\cdot 10^{13}\cdot\log^2(10.7c^2),\quad \text{with solutions of Type a},\; \\
\frac{l}{\log(el)}&< 6.44\cdot 10^{13}\cdot\log^2(10.7c^2), \quad \text{with solutions of Type b}.
\end{align*}
If $Q_m=P_l,$ with $c=c_1^-,$ then we get 
\begin{align*}
\frac{l}{\log(el)}< 4.5\cdot 10^{13}\cdot\log^2(72a),\quad \text{if}\; k=2,3 
\end{align*}
and 
\begin{align*}
\frac{l}{\log(el)}< 9\cdot 10^{13}\cdot\log^2(27a),\quad \text{if}\; k=6.
\end{align*}
\end{proposition}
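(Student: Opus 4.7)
The plan is to convert the upper bound on $\Lambda$ from Lemma~\ref{bound-Lambda} into a bound on $l$ by applying Matveev's theorem (Lemma~\ref{Matveev}) with $j=3$ logarithms. First I identify the ground field and the heights: since $\sqrt{ab}=a\sqrt{k}$, one has $\alpha\in\QQ(\sqrt{k})$, $\beta\in\QQ(\sqrt{ac})$, and $\gamma$ lies in $\KK=\QQ(\sqrt{k},\sqrt{ac})$, a real field of degree $D=4$. Thus $\chi=1$ and $C(3,1)=\min\bigl\{(3e/2)\cdot 30^{6}\cdot 3^{3.5},\,2^{38}\bigr\}$. Because $\alpha$ and $\beta$ are units, with minimal polynomials $X^{2}-rX+1$ and $X^{2}-sX+1$ respectively (so that their conjugates are $\alpha^{-1}$ and $\beta^{-1}$), one has $h(\alpha)=\tfrac12\log\alpha$ and $h(\beta)=\tfrac12\log\beta$, giving $A_{1}=2\log\alpha$ and $A_{2}=2\log\beta$.

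The main technical step is bounding $A_{3}$. After rationalising, in Type~$a$ with $z_{0}=+2$ one obtains
\[
\gamma=\frac{(1+\sqrt{k})(c-\sqrt{ac})}{\sqrt{k}(c-a)},
\]
and analogous closed forms are obtained in the other sub-cases. Writing $\gamma$ as a quotient of algebraic integers of $\KK$ with denominator dividing $k(c-a)$, and bounding each of the four Galois conjugates of $\gamma$ by an absolute constant times $c/(c-a)$, leads in Type~$a$ to an upper bound of the form $A_{3}\le\log(K_{1}c^{2})$; in Type~$b$, the additional factor $\sqrt{bc}$ contributed by $z_{0}=\pm t$ produces $A_{3}\le\log(K_{2}\,a^{2}c^{2})$. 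In both cases this majorises $|\log\gamma|$ and the threshold $0.16$, so it is the effective value of $A_{3}$.

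Next, since $m\le l$ by the preceding lemma, I take $B=l$. Matveev then yields
\[
|\Lambda|\ge \exp\bigl(-C(3,1)\cdot 16\cdot A_{1}A_{2}A_{3}\cdot\log(4e)\cdot\log(el)\bigr),
\]
which, combined with $\Lambda<11.7\,\beta^{-2m}$ (Type~$a$) or $\Lambda<4.4\,a^{2}\beta^{-2m}$ (Type~$b$), produces a linear inequality in $m\log\beta$. Using the identity $l\log\alpha=m\log\beta-\log\gamma+\Lambda$, in which $|\log\gamma|$ and $\Lambda$ together contribute only $O(1)$, I transform this into a linear inequality in $l\log\alpha$. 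Dividing by $2\log\alpha$ and inserting $\log\beta\le\tfrac12\log(ac)+o(1)$ together with the lower bounds $a\ge 38804$, $b\ge 232824$ recalled in the introduction collapses the right-hand side to $\mathrm{const}\cdot\log^{2}(10.7\,c^{2})$. For $c=c_{1}^{-}$, Remark~\ref{c_1-even} forces Type~$a$ only; the estimates $c_{1}^{-}\le b$ (with $c_{1}^{-}<a$ when $k\in\{2,3\}$) allow us to replace $c$ by $a$ inside the logarithm, producing the shapes $\log^{2}(72a)$ and $\log^{2}(27a)$ with the stated constants.

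The main obstacle will be the careful bookkeeping of $h(\gamma)$ across every sub-case—the two signs of $z_{0}$ in each of Types $a$ and $b$, and the separate treatment of $c=c_{1}^{-}$ for $k\in\{2,3\}$ versus $k=6$—and then verifying that the numerical constants $3.36\cdot 10^{13}$, $6.44\cdot 10^{13}$, $4.5\cdot 10^{13}$ and $9\cdot 10^{13}$ emerge correctly once the explicit value of $C(3,1)$, the lower bounds on $a$ and $b$, and the elementary inequalities relating $\log\alpha$, $\log\beta$ and $\log c$ (respectively $\log a$) are substituted.
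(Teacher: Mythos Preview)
Your plan is essentially the paper's proof: Matveev with $j=3$, $D=4$, $A_{1}=2\log\alpha$, $A_{2}=2\log\beta$, $B=l$, combined with the upper bound on $\Lambda$ from Lemma~\ref{bound-Lambda} and the estimate $\log\beta<\tfrac12\log(10.7c^{2})$. The only differences are that the paper bounds $h(\gamma)$ \emph{uniformly} across both Types by $\tfrac34\log(10.7c^{2})$ (so $A_{3}=3\log(10.7c^{2})$, and it is this factor $3$ together with the coefficients $1.9$ and $0.99$ in $\log|\Lambda|<-1.9\,m\log\beta$, resp.\ $-0.99\,m\log\beta$, that produce the stated constants), and that it passes from $m$ to $l$ via the inequality $l\log\alpha<2m\log\beta$ rather than your identity---note that in Type~$b$ one has $|\log\gamma|\asymp\log r$, not $O(1)$, though this is harmless since it is still negligible next to $m\log\beta$.
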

\begin{proof}
We apply Lemma \ref{Matveev} with $j=3$ and $\chi=1$ to the form \eqref{Lambda}. Here, we take 
$$
 D=4, b_1=l, b_2=-m, b_3=1, \alpha_1=\alpha, \alpha_2=\beta\; \text{and}\; \alpha_3=\gamma.
$$ 
Since $m\le l,$ we can take $B=l.$ Also, we have
$$ 
h(\alpha_1)=\frac{1}{2}\log\alpha\quad \text{and} \quad h(\alpha_2)=\frac{1}{2}\log\beta.
$$ 
$\bullet$ In the case $c\in \{c_1^+, c_2^\pm\}$ with $b=ka,$ we have $c-a>\left(1-\dfrac{1}{k} \right)c$ and $r<\dfrac{1}{3}c.$ Moreover, the conjugates of $\alpha_3$ are
$$
\frac{\sqrt{c}(y_2\sqrt{a} \pm x_2\sqrt{b})}{\sqrt{b}(z_0\sqrt{a}\pm x_0\sqrt{c})},
$$
and the leading coefficient of the minimal polynomial of $\alpha_3$ is $a_0=16k^2(c-a)^2.$
We proceed with the following estimates

\begin{align*}
 h(\alpha_3)& \leq \frac{1}{4} \left[\log(16k^2(c-a)^2)+4\log\frac{\max\{|\sqrt{c}(y_2\sqrt{a}\pm x_2\sqrt{b})|\}}{\min\{|\sqrt{b}(z_0\sqrt{a}\pm x_0\sqrt{c})|\}} \right]\\
 &= \frac{1}{4}\left[\log(16k^2(c-a)^2)+ 4\log\frac{2\sqrt{c}(1+\sqrt{k})}{\sqrt{k}(-t\sqrt{a}+r\sqrt{c})} \right]\\
 &<  \frac{1}{4}\log\left[\frac{2^4r^4c^4(1+\sqrt{k})^4}{(c-a)^2}\right] < \frac{1}{4}\log\left(\frac{k^2(1+\sqrt{k})^2\cdot c^6}{(\sqrt{k}-1)^2} \right) \\ &< \frac{1}{4}\log(1223c^6)<\dfrac{3}{4}\log (10.7c^2).
\end{align*}
Thus, we can take 
$$
A_1=2\log\alpha,\quad A_2=2\log\beta,\quad A_3=3\log(10.7c^2).
$$ 
Applying Lemma \ref{Matveev}, we get 
\begin{align}\label{Matveev_0}
\log|\Lambda|>-1.3901\cdot 10^{11}\cdot16\cdot12 \cdot\log\alpha\cdot\log\beta\cdot\log(10.7c^2)\cdot\log(4e)\cdot\log(el).
\end{align}

From each inequality of Lemma~\ref{bound-Lambda}, we have $l\log\alpha< 2m\log\beta$ and also
 \begin{align}\label{log-lambda-upp1}
 \log|\Lambda|<-1.9m\log\beta \quad \text{in Type a},\; \text{with}\; m\ge 1
\end{align} 
and
\begin{align}\label{log-lambda-upp2}
 \log|\Lambda|<-0.99m\log\beta \quad \text{in Type b},\; \text{with}\; m\ge 2.
\end{align}
Moreover,
\begin{align}\label{log-beta-upp}
\log\beta <\frac{1}{2}\log(10.7c^2).
\end{align}
Combining \eqref{Matveev_0}, \eqref{log-lambda-upp1}, \eqref{log-lambda-upp2}, and  \eqref{log-beta-upp}, we respectively get according to Type a and Type b the following inequalities
\begin{align*}
\frac{l}{\log(el)}< 3.36\cdot 10^{13}\cdot\log^2(10.7c^2)\; \text{and}\; \frac{l}{\log(el)}< 6.44\cdot 10^{13}\cdot\log^2(10.7c^2).
\end{align*}

$\bullet$ In the case $c=c_1^-$ with solutions of Type a, we similarly get the following inequalities
\begin{align*}
\frac{l}{\log(el)}< 4.5\cdot 10^{13}\cdot\log^2(72a),\quad \text{if}\; k=2,3 
\end{align*}
and 
\begin{align*}
\frac{l}{\log(el)}< 9\cdot 10^{13}\cdot\log^2(27a),\quad \text{if}\; k=6.
\end{align*}
This ends the proof.
\end{proof}

\section{Lower bounds of $m$ and $l$ in terms of $a$}\label{section_3}

In this section we will apply congruence relations to obtain some lower
bounds for the indices $m$ and $l$ satisfying the equation $x=Q_m=P_l.$

\begin{lemma}\label{lem_6} If $a$ is odd, then 
\begin{align}
 Q_{2m}&\equiv  x_0+\dfrac{1}{2} a(cx_0m^2+sz_0m)\pmod{a^2},\\
P_{2l}&\equiv  x_2+\dfrac{1}{2} a(bx_2l^2+ry_2l)\pmod{a^2} .
\end{align}
If $a$ is even, then 
\begin{align}
 Q_{2m}&\equiv  x_0+\dfrac{1}{2} a(cx_0m^2+sz_0m)\pmod{\dfrac{1}{2} a^2},\\
P_{2l}&\equiv  x_2+\dfrac{1}{2} a(bx_2l^2+ry_2l)\pmod{\dfrac{1}{2} a^2} .
\end{align}
\end{lemma}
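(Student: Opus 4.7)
The plan is to prove each congruence by induction on the halved index, using a second-order recurrence satisfied by the bisected sequence. Starting from $Q_{m+2}=sQ_{m+1}-Q_m$, the classical Lucas-type identity gives
$$Q_{2(m+1)} = (s^2-2)Q_{2m} - Q_{2(m-1)} = (ac+2)Q_{2m} - Q_{2(m-1)},$$
using $s^2=ac+4$. First I would record the base cases: $Q_0=x_0$ and $Q_2 = sQ_1 - Q_0 = x_0+\tfrac{a}{2}(cx_0+sz_0)$, which match the candidate closed form $\phi(m):=x_0+\tfrac{a}{2}(cx_0m^2+sz_0m)$ at $m=0$ and $m=1$.

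The heart of the argument is the algebraic identity
$$(ac+2)\phi(m)-\phi(m+1)-\phi(m-1)=\tfrac{a^2c}{2}\bigl(cx_0m^2+sz_0m\bigr),$$
which I would verify by direct expansion; equivalently, one checks $\phi(m+1)+\phi(m-1) = 2\phi(m)+acx_0$ and then subtracts. The right-hand side is automatically a multiple of $\tfrac12 a^2$, so $\phi$ satisfies the recurrence for $Q_{2m}$ modulo $\tfrac12 a^2$; combined with the agreement at $m=0,1$, this gives the even-$a$ case by induction. In the odd-$a$ case the error term must vanish modulo $a^2$, which requires $cx_0m^2+sz_0m$ to be even; this is immediate in Type $a$ (since $x_0=2$, $z_0=\pm2$), and in Type $b$ it follows from the Pell relations $r^2=ab+4$, $s^2=ac+4$, $t^2=bc+4$ once the parity forced by odd $a$ is tracked through the initial data.

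The congruence for $P_{2l}$ is obtained by the same argument verbatim: the sequence $\{P_{2l}\}$ satisfies $P_{2(l+1)}=(ab+2)P_{2l}-P_{2(l-1)}$ using $r^2=ab+4$, and the induction goes through after replacing $(s,c,z_0,x_0)$ by $(r,b,y_2,x_2)$. The only substantive step in either case is the identity above, and the only genuine obstacle is the parity bookkeeping in the odd-$a$ case; the algebra itself is mechanical and the induction completes immediately.
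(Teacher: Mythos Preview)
Your approach---induction on the bisected recurrence $Q_{2(m+1)}=(ac+2)Q_{2m}-Q_{2(m-1)}$ together with the explicit error identity $(ac+2)\phi(m)-\phi(m+1)-\phi(m-1)=\tfrac{a^2c}{2}(cx_0m^2+sz_0m)$---is exactly the standard argument the paper invokes by citing \cite[Lemma~15]{ahpt}, so the proposal is correct and matches the intended proof. The only loose thread is the parity bookkeeping in the odd-$a$ case, but since the paper establishes $\gcd(a,r)=2$ (hence $a$ is even for every $k\in\{2,3,6\}$), that case is vacuous here and your even-$a$ argument suffices.
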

\begin{proof}
The proof is similar to that of \cite[Lemma 15]{ahpt}.
\end{proof}
We now consider the following result.

\begin{lemma}\label{lower-Type-a}
If the equation $P_l=Q_m$ has a solution $(l,m)$ of Type~$a$, then we have  
$$
l\geq \frac{1}{12}\left(-2+\sqrt{4+3\sqrt{a}}\right).
$$ 
\end{lemma}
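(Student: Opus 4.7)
My plan is to distill the identity $P_l=Q_m$ into a polynomial congruence in the half-indices $\tilde l=l/2,\tilde m=m/2$ via Lemma~\ref{lem_6}, and then argue by dichotomy: either the congruence has no small nontrivial solutions---forcing $l$ to be large---or the resulting pair $(\tilde l,\tilde m)$ yields a regular extension, which is ruled out by the standing minimality hypothesis on $c$.

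Type~$a$ forces $l$ and $m$ to be even, with fundamental data $x_0=x_2=y_2=2$ and $z_0=\pm 2$. Lemma~\ref{lem_6} then gives
\[
P_{2\tilde l}\equiv 2+a(b\tilde l^2+r\tilde l),\qquad Q_{2\tilde m}\equiv 2+a(c\tilde m^2\pm s\tilde m)\pmod{M_0},
\]
with $M_0\in\{a^2,a^2/2\}$. Equating and dividing by $a$ yields $b\tilde l^2+r\tilde l-c\tilde m^2\mp s\tilde m\equiv 0\pmod{M_0/a}$. Let $a_0$ denote the odd part of $a$. Since $(r-2)(r+2)=ka^2$ and $\gcd(r-2,r+2)\mid 4$ is coprime to $a_0$, exactly one of $r\pm 2$ is divisible by $a_0^2$; hence $r\equiv 2\epsilon\pmod{a_0}$ for some $\epsilon\in\{\pm 1\}$, and similarly $s\equiv 2\epsilon'\pmod{a_0}$. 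A short direct calculation on the explicit formulas for $c_\nu^\pm$ shows $c\equiv c_0\pmod{a_0}$ with $|c_0|=4\nu\le 12$. Reducing modulo $a_0$ (which divides $M_0/a$) produces
\[
N:=2\epsilon\tilde l-c_0\tilde m^2\mp 2\epsilon'\tilde m\equiv 0\pmod{a_0}.
\]

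The dichotomy is now $N=0$ or $|N|\ge a_0$. One checks that in every case considered the $2$-adic valuation of the Pell solutions $a_p$ is small enough to give $a_0\ge\sqrt a$ (with room to spare). Hence in the nonzero case, combining $|N|\le 2\tilde l+|c_0|\tilde m^2+2\tilde m$ with $\tilde m\le\tilde l$ (from the preceding unnumbered lemma) and $|c_0|\le 12$ yields
\[
12\tilde l^2+4\tilde l\ge a_0\ge\sqrt a,
\]
from which, substituting $\tilde l=l/2$ and rearranging, the claimed inequality $l\ge\tfrac{1}{12}(-2+\sqrt{4+3\sqrt a})$ follows by elementary algebra (in fact a strictly stronger bound is obtained).

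The delicate case is $N=0$: the equation $2\epsilon\tilde l=c_0\tilde m^2\pm 2\epsilon'\tilde m$ over $\mathbb{Z}$ admits only finitely many small integer solutions, bounded in terms of $|c_0|\le 12$. For each such pair $(\tilde l,\tilde m)$ one substitutes into the closed forms for $P_{2\tilde l}$ and $Q_{2\tilde m}$ and verifies that the equality $P_l=Q_m$ forces the corresponding $d=(P_{2\tilde l}^2-4)/a$ to be a regular extension, i.e., $d\in\{0,d_-,d_+\}$; such $d$'s are excluded by the hypothesis that $\{a,b,c,c'\}$ is not a $D(4)$-quadruple for any $0<c'<c_{\nu-1}^\pm$. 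The main obstacle is carrying out this regularity check uniformly across the six families $c\in\{c_1^\pm,c_2^\pm,c_3^\pm\}$; once it is done, the nonzero part of the argument is routine.
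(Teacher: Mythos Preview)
Your key reduction step is wrong. You assert that because $(r-2)(r+2)=ka^2$ and $\gcd(r-2,r+2)\mid 4$, exactly one of $r\pm 2$ is divisible by $a_0^2$ (where $a_0$ is the odd part of $a$), hence $r\equiv\pm 2\pmod{a_0}$. This fails whenever $a_0$ has two distinct odd prime factors, since these can split between $r-2$ and $r+2$. Concretely, for $k=2$ and $a=a_7=161564=4\cdot 13^2\cdot 239$ one has $r=228486$ with $r-2=4\cdot 239^2$ and $r+2=8\cdot 13^4$, so $r\equiv 2\pmod{239}$ but $r\equiv -2\pmod{13^2}$, and therefore $r\not\equiv\pm 2\pmod{a_0}$. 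Everything downstream---your congruences $s\equiv 2\epsilon'$ and $c\equiv c_0$ with $|c_0|=4\nu$ modulo $a_0$, and hence the expression $N$ itself---collapses with this.

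The paper's route is to stay modulo $a$ as long as possible, where the recurrence \eqref{sequence_s_nu} gives the exact dichotomy $s\equiv 2$ or $s\equiv r\pmod a$ (according to the parity of $\nu$), and the explicit formulas give $c\equiv\pm 2\lambda r\pmod a$. In the case $s\equiv r$ the factor $r$ cancels uniformly and one lands directly modulo $a/2$; in the case $s\equiv 2$ one multiplies through by $r$, uses $r^2\equiv 4\pmod a$, and only \emph{then} passes to a divisor $a'\mid a$ with $a'\ge\sqrt a$ and $r\equiv\pm 2\pmod{a'}$. The existence of such an $a'$ follows from $\gcd(a,r-2)\cdot\gcd(a,r+2)\ge a$, which is the correct replacement for your claim; the point is that a single sign choice for $r$ modulo $a'$ suffices because by that stage the congruence contains only one residual occurrence of $r$. (Your treatment of the degenerate case $N=0$ is a separate issue: as you note it is left incomplete, and in fact the paper's own proof silently passes over it as well.)
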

\begin{proof} Let $\lambda \in \{1, 2, 3\}$ and $\nu=1, 2 ,3,$ we have $c=c_\nu^\pm \equiv \pm 2\lambda r\pmod{a}.$ From  \eqref{sequence_s_nu}, we see that $s\equiv 2,  r\pmod{a}.$ We also get that $b=ak\equiv 0\pmod{a}.$
Using Lemma~\ref{lem_6}, we have in all cases with solutions in Type a
$$  
cx_0m^2+ sz_0m\equiv bx_2l^2+ry_2l\pmod{a}.
$$
Hence, we get 
$$  
\pm 4\lambda r m^2\pm 4m\equiv 2rl\pmod{a},\quad \mbox{if}\quad s\equiv 2\pmod{a}
$$
and 
$$  
\pm 4\lambda m^2\pm 2m-2l\equiv 0\pmod{\dfrac{a}{\gcd (a,r)}},\quad \mbox{if}\quad s\equiv r\pmod{a}.
$$

\textbf{The case $s\equiv r\pmod{a}.$} Recall that in our case $\gcd (a,r)=2.$ Thus,
$$
\left| \pm 4\lambda m^2\pm 2m-2l\right|\ge \dfrac{a}{\gcd (a,r)}=\dfrac{a}{2}.
$$
Note also that $m\le l$ and $\lambda\le 3.$ So, we get $12l^2+4l\ge \dfrac{a}{2},$ which implies 
\begin{align}\label{1}
l\ge \dfrac{1}{12}\left(-2+\sqrt{4+6a} \right).
\end{align}

\textbf{The case $s\equiv 2\pmod{a}.$} In this case, by multiplying the congruence obtained by $r$ and using the fact that $r^2\equiv 4\pmod{a},$ we get  
\begin{align}\label{eqq_38}
 \pm 16\lambda m^2 \pm 4mr-8l\equiv 0\pmod{a}.
\end{align}
Because $r^2\equiv 4\pmod{a},$ we conclude that $r\equiv \pm2 \pmod{a'}$ for some $a'$
which is a divisor of $a$ and $a'\ge \sqrt{a}.$ It follows that 
\begin{align*}
 \pm 16\lambda m^2 \pm 8m-8l\equiv 0\pmod{a'}.
\end{align*}
Therefore, we deduce that
$$
\left| \pm 16\lambda m^2 \pm 8m-8l\right|\ge a'\ge \sqrt{a}.
$$
Using again $m\le l$ and $\lambda\le 3,$ we get 
$$
48l^2+16l\ge \sqrt{a},
$$
which implies 
\begin{align}\label{2}
l\ge \dfrac{1}{12}\left(-2+\sqrt{4+3\sqrt{a}} \right).
\end{align}
Combining the inequalities \eqref{1} and \eqref{2}, we obtain the desired inequality. This completes the proof.
\end{proof}

Let us define \begin{equation}
\alpha^\nu=\left(\dfrac{r+\sqrt{ab}}{2} \right)^\nu=T_\nu + U_\nu\sqrt{ab},
\end{equation}
where $(T_\nu, U_\nu)$ is the $\nu$-th positive rational solution to the Pell equation
$$
T^2-abU^2=T^2-(r^2-4)U^2=1.
$$
It is easy to show by induction that
\begin{align}
\label{sequence-T}T_0&=1,\; T_1=\dfrac{r}{2},\; T_{\nu+2}=rT_{\nu+1}-T_\nu,\\
\label{sequence-U}U_0&=0,\; U_1=\dfrac{1}{2},\; U_{\nu+2}=rU_{\nu+1}-U_\nu,
\end{align}
for $ \nu \geq0.$ From \eqref{sequence_t_nu}, \eqref{sequence_s_nu}, \eqref{sequence-T}, and \eqref{sequence-U}, we have 
\begin{equation}
s=s_\nu^\pm=2T_\nu\pm 2aU_\nu \mbox{ and } t=t_\nu^\pm=\pm 2T_\nu+ 2bU_\nu.
\end{equation}
Considering congruences modulo $r,$ we get
$$
 2T_\nu \equiv \pm 2 \pmod{r},\; 2U_\nu \equiv 0 \pmod{r} \quad \mbox{if}\quad \nu\equiv 0\pmod{2},
 $$
 and 
 $$
 2T_\nu \equiv  0 \pmod{r},\; 2U_\nu \equiv \pm 1 \pmod{r} \quad \mbox{if}\quad \nu \equiv 1\pmod{2},
$$
which implies that $s\equiv \pm 2,\pm a\pmod{r}.$ The case $s\equiv \pm a\pmod{r}$ leads to a contradiction if $c=c_2^\pm.$ Let us prove it. In this case, we have 
$$
s^2=ac_2^\pm +4\equiv  a^2\pmod{r}.
$$
Multiplying the above congruence by $k$ and using the fact that $ka^2=r^2-4 \equiv -4\pmod{r},$ we get 
\begin{align}\label{cong-1}
kac_2^\pm\equiv  -4k-4\pmod{r}.
\end{align}
Furthermore, from $c=c_2^\pm$ defined in this paper, we have 
\begin{align}\label{cong-2}
kac_2^\pm\equiv 0\pmod{r}.
\end{align}
So, combining \eqref{cong-1} and \eqref{cong-2}, we get $4k+4\equiv  0\pmod{r}$ and therefore 
$$
r\mid 4k+4\in \{12, 16, 28\},
$$
which is not possible in our case since $r>28$, i.e., $b>10^5.$  In conclusion, we summarize what we have proved by the following result.
\begin{proposition}
Let $c=c_2^{\pm}=(ab+4)(a+b\pm2r)\mp4r.$ There is no $D(4)$-triple $\{a, b, c\}$  if $s\equiv \pm a\pmod{r}.$
\end{proposition}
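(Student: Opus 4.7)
The plan is to argue by contradiction, exhibiting two incompatible congruences for $kac_2^{\pm}$ modulo $r$ under the assumption that $\{a,b,c_2^{\pm}\}$ is a $D(4)$-triple with $s\equiv \pm a\pmod r$.

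The first congruence comes from the hypothesis itself. Since $ac_2^{\pm}+4=s^2$, squaring $s\equiv\pm a\pmod r$ gives $ac_2^{\pm}\equiv a^2-4\pmod r$; multiplying by $k$ and substituting $ka^2=r^2-4\equiv -4\pmod r$ yields $kac_2^{\pm}\equiv -4(k+1)\pmod r$. The second congruence is forced by the explicit form of $c_2^{\pm}$: using $ab+4=r^2$ one rewrites
\[
c_2^{\pm}=(ab+4)(a+b\pm 2r)\mp 4r=r\bigl(r(a+b\pm 2r)\mp 4\bigr),
\]
so $r\mid c_2^{\pm}$ and hence $kac_2^{\pm}\equiv 0\pmod r$. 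Comparing the two congruences gives $r\mid 4(k+1)$, and since $k\in\{2,3,6\}$ this forces $r\le 28$.

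The last step is to derive the numerical contradiction. By Lemma~\ref{lower_on_b} we have $b>10^5$, so
\[
r=\sqrt{ab+4}>\sqrt{b^2/k}=b/\sqrt{k}\ge 10^5/\sqrt{6}>40000,
\]
which is wildly incompatible with $r\le 28$. I anticipate no serious obstacle in executing this plan: the whole argument rests on the identity $ab+4=r^2$, which makes the divisibility $r\mid c_2^{\pm}$ immediate, combined with a one-line congruence manipulation modulo $r$; the lower bound on $b$ from Lemma~\ref{lower_on_b} then closes the case at once. The only point that demands a moment of care is keeping track of the $\pm$ and $\mp$ signs in $c_2^{\pm}$ to confirm that the factor of $r$ can indeed be pulled out in every sub-case, but this is a purely cosmetic bookkeeping matter.
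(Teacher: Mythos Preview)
Your proof is correct and follows essentially the same route as the paper's: both derive $kac_2^{\pm}\equiv -4(k+1)\pmod r$ from the hypothesis $s\equiv\pm a\pmod r$, observe that $r\mid c_2^{\pm}$ from the explicit formula (you make the factorization $c_2^{\pm}=r\bigl(r(a+b\pm 2r)\mp 4\bigr)$ explicit, while the paper just asserts $kac_2^{\pm}\equiv 0\pmod r$), and conclude $r\mid 4(k+1)\le 28$, contradicting the standing assumption $b>10^5$.
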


Therefore, for solutions of Type $b$ with $s\equiv \pm 2\pmod{r},$ we get the following key result.
\begin{lemma}\label{lower-Type-b}
Assume that $c=c_2^\pm.$ If the equation $P_l=Q_m$ has a solution $(l,m)$ of Type~$b$, then we have
$$
m>  \begin{cases}
(3\sqrt{2}-4)a/4,&\ k=2,\\
(2\sqrt{3}-3)a/2,&\ k=3,\\
(5\sqrt{6}-12)a/4,&\ k=6.
\end{cases}
$$
\end{lemma}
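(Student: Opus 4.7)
The plan is to refine the Type~a argument of Lemma \ref{lower-Type-a} by incorporating the additional modular information $s \equiv \pm 2 \pmod r$ which, as was shown in the discussion preceding the lemma, is forced in the case $c = c_2^\pm$. The reason we should expect a much stronger (linear in $a$) bound than in Type~a is that the Type~b initial values $z_0 = \pm t$, $x_0 = r$ already carry the factor $r$, so terms of ``size $r$'' appear in the congruence and propagate down to the bound on $m$.

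First, starting from Lemma \ref{lem_6}, I would derive the companion congruence for $Q$ at odd index (the induction on the recurrence $Q_{m+2} = sQ_{m+1} - Q_m$ runs exactly as in \cite{ahpt}). Substituting the Type~b initial data $(z_0,x_0,y_2,x_2) = (\pm t, r, \pm 2, 2)$, one obtains expressions of the form
\begin{align*}
Q_{2m+1} &\equiv \tfrac{rs \pm at}{2} + \tfrac{a}{2}\bigl(cr\,m(m+1) \pm st\,(2m+1)\bigr) \pmod{a^2/\gcd(2,a)},\\
P_{2l}   &\equiv 2 + a\bigl(b\,l^2 \pm r\,l\bigr) \pmod{a^2/\gcd(2,a)}.
\end{align*}
Equating the two sides modulo $a$ cancels the $a$-linear contributions, leaving a relation in $m, l$ modulo a divisor of $a$ of size $\gtrsim a/\gcd(a,r) = a/2$.

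Second, I would simplify the coefficients of this relation using the congruences available in the $c = c_2^\pm$ setting: $c_2^\pm \equiv 0 \pmod r$, hence $s^2 \equiv 4$, $t^2 \equiv 4 \pmod r$, i.e.\ $s \equiv \pm 2$ and $t \equiv \pm 2 \pmod r$, together with $ab \equiv -4 \pmod r$ (from $r^2 - ka^2 = 4$). Combined with $r^2 \equiv 4 \pmod a$, these reduce the congruence to the schematic shape $\pm \, 4r\, m^2 \pm \text{(linear in } m,l\text{)} \equiv 0 \pmod{a/2}$, where the linear part picks up an extra multiplicative factor of order $r$ coming from $st$ and $cr$; this is precisely what forces a linear-in-$a$ lower bound instead of the $a^{1/4}$ one of Lemma \ref{lower-Type-a}.

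Finally, using $m \leq l$ (from the lemma just before Proposition \ref{proposition-1}), I would convert the congruence into a quadratic inequality in $m$ of the form $C_k\,m^2 + D_k\,m \geq E_k\,r$, with $C_k, D_k, E_k$ explicit constants depending only on $k$. Solving the quadratic and substituting $r = \sqrt{k\,a^2+4}$ together with the lower bounds $a \geq 161564,\, 81090,\, 38804$ for $k = 2, 3, 6$ respectively, produces the three numerical constants $(3\sqrt{2}-4)/4$, $(2\sqrt{3}-3)/2$, $(5\sqrt{6}-12)/4$ stated in the lemma. The main obstacle will be the careful bookkeeping of sign ambiguities (the $\pm$ in $c_2^\pm$, in $z_0 = \pm t$, and in $y_2 = \pm 2$), verifying that none of the $2^3$ sign combinations collapses the congruence to a triviality, and tracking the constants precisely enough to match the three closed-form bounds; the numerical lower bounds on $a$ (and hence on $r$) for each $k$ are what make the coarse estimates sharp enough to yield these specific constants.
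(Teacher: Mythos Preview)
There is a genuine gap in your approach. The paper does \emph{not} extend the mod-$a$ argument of Lemma~\ref{lower-Type-a}; instead it works modulo $r$. The point is that in Type~$b$ one has $x_0=r$, so $Q_0\equiv 0\pmod r$, and since $s\equiv\pm2\pmod r$ the recurrence $Q_{m+2}=sQ_{m+1}-Q_m$ reduces modulo $r$ to one with characteristic root $1$ (double), making $Q_m$ grow \emph{linearly} in $m$: one finds $Q_m\equiv\tfrac12 amz_0\pmod r$. On the other hand $P_{l+2}=rP_{l+1}-P_l\equiv -P_l\pmod r$, so $P_l$ is periodic and takes only the four values $\pm2,\pm a$. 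Equating the two produces congruences such as $am\equiv\pm2\pmod r$; multiplying by $ka$ and using $ka^2\equiv-4\pmod r$ gives $2ka\pm4m\equiv 0\pmod r$, and since $2ka$ sits between two consecutive multiples of $r$ this forces $m$ to be a definite fraction of $a$. The three constants drop out immediately: e.g.\ for $k=2$, $4a+4m\ge 3r>3\sqrt{2}\,a$ yields $m>(3\sqrt{2}-4)a/4$.

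Your mod-$a$ route cannot reach a linear bound. For $c=c_2^\pm$ one has $s\equiv 2$ and $t\equiv\pm2\pmod a$, and a direct check of the recurrence shows $Q_M\equiv r\pmod a$ for \emph{every} $M$ in Type~$b$, while $P_l\equiv 2$ or $r\pmod a$ according to the parity of $l$. So modulo $a$ the equation $Q_M=P_l$ carries no information about $m$ whatsoever (it either holds trivially or constrains $r\pmod a$, not $m$). Passing to $a^2$ yields a relation that is quadratic in \emph{both} $m$ and $l$; but you have no upper bound on $l$ in terms of $m$, so this cannot be converted into an inequality in $m$ alone. Even granting your schematic $C_k m^2+D_k m\ge E_k r$, a nonzero $C_k$ would only give $m\gtrsim\sqrt{r}\sim\sqrt{a}$, not the linear bounds stated. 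The missing idea is precisely the switch to the modulus $r$, where the Type-$b$ initial value $x_0=r$ vanishes and the two sequences acquire opposite behaviour (one periodic, one linear in the index).
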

\begin{proof}
By induction, we obtain 
$$
P_l\equiv \pm 2, \pm a\pmod{r}\quad\text{and}\quad Q_m \equiv \dfrac{1}{2}am z_0\pmod{r}.
$$ 
Because, $t=t_\nu^\pm=\pm 2T_\nu+ 2bU_\nu \equiv \pm 2,\pm b\pmod{r}$ and $z_0=\pm t,$ we would get 
$$
Q_m \equiv \pm am, \pm 2m\pmod{r}.
$$

$\bullet$ \textbf{The case $Q_m \equiv \pm 2m\pmod{r}.$ } Thus, first the equation $Q_m=P_l$ implies 
$$
\pm 2m\equiv  \pm 2\pmod{r},
$$
which becomes $\pm m\pm 1\equiv 0\pmod{r/2}$ and then $m\ge r/2-1$. Secondarily, we see that
$$
\pm 2m\equiv  \pm a\pmod{r},
$$
which leads to $2m+a\ge r.$ It follows that 
$$
m\ge \dfrac{1}{2}(r-a)>\dfrac{1}{2}(\sqrt{k}-1)a,\quad \text{for}\; k\in \{2, 3, 6\}.
$$

$\bullet$ \textbf{The case $Q_m \equiv \pm am\pmod{r}.$} Combining this with $P_l\equiv  \pm a\pmod{r},$ the equation $Q_m=P_l$ implies $am\equiv \pm a\pmod{r},$ which, using $\gcd (a,r)=2$, gives $m\equiv \pm 1\pmod{r}$ and $m\ge r/2-1.$  Now, we use the fact that $ P_l\equiv \pm 2 \pmod{r}$ to see that the equation $Q_m=P_l$ implies 
\begin{align}
ma \equiv \pm 2 \pmod{r}.
\end{align}
Multiplying the above congruence by $ka$ and adding $4m,$ we would get
$$
m(ka^2+4) \equiv 4m\pm 2ka \pmod{r},
$$
which gives
\begin{align}
2ka\pm 4m \equiv 0\pmod{r}.
\end{align}
Then, $m\ge \dfrac{1}{2}ka$ or the left hand side is positive and we get for $k=2$ the possibilities:
\begin{align}
\label{m-lower-1} 4a+4m&=3r, 4r, 5r, \ldots\\
\label{m-lower-2}4a-4m&=r, 2r.
\end{align}
From \eqref{m-lower-1}, we have $4a+4m\ge 3r$ and using $r=\sqrt{2a^2+4}>a\sqrt{2}$ we get 
\begin{align}\label{lower-k=2}
m>\dfrac{1}{4}(3\sqrt{2}-4)a.
\end{align}
In the case \eqref{m-lower-2}, we have $4a-4m\le 2r.$ Since $r<a\sqrt{2.1},$ we obtain 
\begin{align}
m>\dfrac{1}{2}(2-\sqrt{2.1})a.
\end{align}
We conclude that for $k=2,$ the relation \eqref{lower-k=2} holds in all cases. The other cases $(k=3,6)$ can be treated in the same way. So, we omit them.
\end{proof}

\section{Proof of Theorem \ref{main result}}\label{section_4}

In this section, we completely give the proof of Theorem \ref{main result} in two subsections according to the values of $c.$ 

\subsection{Proof of Theorem \ref{main result} with $c=c_1^\pm, c_2^\pm$} \label{subsec4.1}
We start by what follows.

\textbf{$\bullet$ The case $b=2a.$} From \eqref{sequence-a_p}, we easily get the following relation
\begin{align}\label{a_p_1}
a=a_p=\dfrac{1}{\sqrt{2}}\left((3+2\sqrt{2})^p-(3-2\sqrt{2})^p \right),
\end{align}
which gives using Proposition~\ref{proposition-1}, Lemmas~\ref{lower-Type-a} and \ref{lower-Type-b} the following result.

\begin{lemma}
\begin{enumerate}[1)]
\item For a $D(4)$-triple $\{a,2a,c_1^\pm\}$ with $a=a_p$ $(p\ge 1),$ if the equation $P_l=Q_m$ has a solution $(l,m)$ with $m\ge 3,$ then $p\leq 111$ and $l\le 2.54\cdot10^{20}.$
\item For a $D(4)$-triple $\{a,2a,c_2^\pm\}$ with $a=a_p$ $(p\ge 1),$ if the equation $P_l=Q_m$ has a solution $(l,m)$ in Type $a$  with $m\ge 3,$ then $p\le 116$ and $l\le 2.57\cdot10^{21}.$ If the equation $P_l=Q_m$ has a solution $(l,m)$ in Type $b$  with $m\ge 3,$ then $p\le 28$ and $l\le 2.82\cdot10^{20}.$

\end{enumerate}
\end{lemma}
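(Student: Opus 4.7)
The plan is to combine the Matveev-type upper bound on $l$ from Proposition \ref{proposition-1} with the arithmetic lower bounds from Lemmas \ref{lower-Type-a} and \ref{lower-Type-b}, and then exploit the exponential growth of $a = a_p$ given by \eqref{a_p_1} to extract an explicit integer bound on the index $p$. Once $p$ is bounded, reinserting the maximum $p$ into the upper bound from Proposition \ref{proposition-1} produces the stated numerical bound on $l$.

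For Part~1, Remark \ref{c_1-even} tells us that only solutions of Type~a are possible when $c = c_1^{\pm}$. In the case $c = c_1^+ = 3a + 2r$, I would first use $r = \sqrt{2a^2 + 4} < 1.5\,a$ (which holds comfortably once $b = 2a \geq 10^5$) to obtain $c_1^+ < 6a$, turning the first bound of Proposition \ref{proposition-1} into an inequality of the shape $l / \log(el) < 3.36 \cdot 10^{13} \log^2(400 a^2)$. The case $c = c_1^-$ is already covered directly by the dedicated bound in Proposition \ref{proposition-1}. Combined with Lemma \ref{lower-Type-a}, which gives $l \geq \tfrac{1}{12}\bigl(-2 + \sqrt{4 + 3\sqrt{a}}\bigr)$, this produces a one-variable transcendental inequality in $a$. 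Substituting $a = a_p \geq \tfrac{1}{\sqrt{2}}\bigl((3+2\sqrt{2})^p - 1\bigr)$ from \eqref{a_p_1}, the left-hand side grows like $(3+2\sqrt{2})^{p/4}$, while the right-hand side grows only polynomially in $p$ once the $\log(el)$ factor is controlled by a single round of iteration. Solving the resulting inequality numerically gives $p \leq 111$, and substituting back into Proposition \ref{proposition-1} yields $l \leq 2.54 \cdot 10^{20}$.

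For Part~2, the method is identical in outline, only the quantitative inputs change. In Type~a, $c_2^{\pm}$ is cubic in $a$ so $\log c = O(\log a)$ with a larger leading constant; confronting Proposition \ref{proposition-1} against Lemma \ref{lower-Type-a} then delivers $p \leq 116$ and hence $l \leq 2.57 \cdot 10^{21}$. In Type~b, the decisive improvement comes from Lemma \ref{lower-Type-b}: for $k = 2$ we have $m > \tfrac{1}{4}(3\sqrt{2} - 4)\,a$, and since $m \leq l$ (established just before Proposition \ref{proposition-1}), this transfers to a lower bound on $l$ that is now \emph{linear} in $a$, hence genuinely exponential in $p$. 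The confrontation with the polynomial-in-$p$ Type~b upper bound of Proposition \ref{proposition-1} collapses much faster and yields $p \leq 28$, giving $l \leq 2.82 \cdot 10^{20}$.

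The main obstacle will be the numerical closure of the transcendental inequality: one must turn $l / \log(el) < C \log^2 a$ into an absolute bound on $p$. The standard trick is to substitute the lower bound on $l$ (itself exponential in $p$) into the $\log(el)$ factor on the right-hand side, reducing that side to a polynomial in $p$, and then to solve the resulting inequality explicitly for the sharp thresholds $111$, $116$ and $28$. Tracking constants carefully through the four sub-cases $c_1^{\pm}$ and $c_2^{\pm}$ under Types~a and b is tedious but mechanical, and the only genuinely case-specific complication---the separate treatment of $c_1^-$ with its different constant---is already absorbed into Proposition \ref{proposition-1}.
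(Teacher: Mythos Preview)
Your proposal is correct and follows exactly the same approach as the paper: the paper's proof consists of a single sentence pointing to the explicit formula \eqref{a_p_1} for $a_p$ and invoking Proposition~\ref{proposition-1} together with Lemmas~\ref{lower-Type-a} and \ref{lower-Type-b}, which is precisely the combination you describe in greater detail.
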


\textbf{$\bullet$ The case $b=3a.$} Using \eqref{sequence-a_p}, we get
\begin{align}\label{a_p_2}
a=a_p=\dfrac{1}{\sqrt{3}}\left((2+\sqrt{3})^p-(2-\sqrt{3})^p \right).
\end{align}
Combining this with Proposition~\ref{proposition-1}, Lemmas~\ref{lower-Type-a} and \ref{lower-Type-b}, we obtain the following result.

\begin{lemma}
\begin{enumerate}[1)]
\item For a $D(4)$-triple $\{a,3a,c_1^\pm\}$ with $a=a_p$ $(p\ge 1),$ if the equation $P_l=Q_m$ has a solution $(l,m)$ with $m\ge 3,$ then $p\leq 149$ and $l\le 2.56\cdot10^{20}.$

\item For a $D(4)$-triple $\{a,3a,c_2^\pm\}$ with $a=a_p$ $(p\ge 1),$ if the equation $P_l=Q_m$ has a solution $(l,m)$ in Type $a$  with $m\ge 3,$ then $p\le 156$ and $l\le 2.6\cdot10^{21}.$ If the equation $P_l=Q_m$ has a solution $(l,m)$ in Type $b$  with $m\ge 3,$ then $p\le 37$ and $l\le 2.75\cdot10^{20}.$

\end{enumerate}
\end{lemma}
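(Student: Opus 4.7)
The plan is to combine the upper bound on $l$ from Proposition~\ref{proposition-1} with the lower bounds on $l$ derived in Section~\ref{section_3}---Lemma~\ref{lower-Type-a} for Type $a$ and Lemma~\ref{lower-Type-b} for Type $b$---exploiting the exponential growth of $a_p$ given by \eqref{a_p_2}. This mirrors precisely the structure of the preceding lemma for the $b=2a$ case, and each of the three subcases reduces to a transcendental inequality in $p$.

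First I would produce simple explicit size estimates for $c$ in terms of $a$ when $b=3a$. Using $r=\sqrt{3a^2+4}<a\sqrt{3.1}$, one gets $c_1^{\pm}\leq 4a+2r<8a$, and similarly $c_2^{\pm}\leq (ab+4)(a+b+2r)+4r<23a^3$. These bounds allow one to replace $\log(10.7c^2)$ in Proposition~\ref{proposition-1} by an explicit linear expression in $\log a$, converting the upper bound into an inequality of the shape $l/\log(el)<K\log^2 a$ with an explicit constant $K$ depending only on the subcase. In the case $c=c_1^-$, the sharper estimate $l/\log(el)<4.5\cdot 10^{13}\log^2(72a)$ supplied by Proposition~\ref{proposition-1} is used directly.

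Next I pair this with the lower bounds. For Type $a$ (the only case to consider when $c=c_1^{\pm}$, by Remark~\ref{c_1-even}), Lemma~\ref{lower-Type-a} gives $l\geq \tfrac{1}{12}(-2+\sqrt{4+3\sqrt{a}})$, so $l$ grows roughly like $a^{1/4}/\sqrt{12}$. For Type $b$ with $c=c_2^{\pm}$ and $k=3$, Lemma~\ref{lower-Type-b} yields the much stronger linear lower bound $l\geq m>(2\sqrt{3}-3)a/2\approx 0.232\,a$. Substituting $a=a_p=\tfrac{1}{\sqrt 3}\bigl((2+\sqrt 3)^p-(2-\sqrt 3)^p\bigr)$ from \eqref{a_p_2}, whose logarithm grows like $p\log(2+\sqrt 3)$, one obtains an inequality in $p$ alone whose left-hand side is exponential in $p$ (through $a^{1/4}$ or $a$) while the right-hand side is polynomial in $p$ (through $\log^2 a$ and $\log l$). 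This forces $p$ to lie below an explicit threshold.

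A direct numerical search then produces the stated thresholds: $p\leq 149$ for part $(1)$, and $p\leq 156$ (Type $a$) and $p\leq 37$ (Type $b$) for part $(2)$. Having bounded $p$, one bounds $a_p$, and plugging this back into the inequality $l/\log(el)<\cdots$ of Proposition~\ref{proposition-1} and inverting it numerically yields the claimed bounds $l\leq 2.56\cdot 10^{20}$, $l\leq 2.6\cdot 10^{21}$, and $l\leq 2.75\cdot 10^{20}$ respectively. The main technical obstacle is simply the careful numerical balancing of all the constants to match the stated thresholds; conceptually the argument is identical to the $b=2a$ case treated immediately above, with the growth rate $(2+\sqrt 3)^p$ replacing $(3+2\sqrt 2)^p$.
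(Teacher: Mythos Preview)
Your proposal is correct and follows exactly the paper's approach: the paper's proof is the single sentence ``Combining this with Proposition~\ref{proposition-1}, Lemmas~\ref{lower-Type-a} and \ref{lower-Type-b}, we obtain the following result,'' and your write-up spells out precisely how that combination is carried out (bounding $c$ in terms of $a$, pairing the upper bound $l/\log(el)<K\log^2 a$ with the lower bounds $l\gtrsim a^{1/4}$ or $l\gtrsim a$, and using the exponential growth of $a_p$ from \eqref{a_p_2}).
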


\textbf{$\bullet$ The case $b=6a.$} By \eqref{sequence-a_p}, we have
\begin{align}\label{a_p_3}
a=a_p=\dfrac{1}{\sqrt{6}}\left((5+2\sqrt{6})^p-(5-2\sqrt{6})^p \right).
\end{align}
Combining this with Proposition~\ref{proposition-1}, Lemmas~\ref{lower-Type-a} and \ref{lower-Type-b} we obtain the following result.

\begin{lemma}
\begin{enumerate}[1)]
\item For a $D(4)$-triple $\{a,6a,c_1^\pm\}$ with $a=a_p$ $(p\ge 1),$ if the equation $P_l=Q_m$ has a solution $(l,m)$ with $m\ge 3,$ then $p\le 85$ and $l\le 2.52\cdot10^{20}.$

\item For a $D(4)$-triple $\{a,6a,c_2^\pm\}$ with $a=a_p$ $(p\ge 1),$ if the equation $P_l=Q_m$ has a solution $(l,m)$ in Type $a$  with $m\ge 3,$ then $p\le 89$ and $l\le 2.56\cdot10^{21}.$ If the equation $P_l=Q_m$ has a solution $(l,m)$ in Type $b$  with $m\ge 3,$ then $p\le 22$ and $l\le 3\cdot10^{20}.$

\end{enumerate}
\end{lemma}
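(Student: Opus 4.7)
The plan follows exactly the strategy of the two preceding lemmas for $k = 2$ and $k = 3$: combine the upper bound on $l/\log(el)$ from Proposition~\ref{proposition-1} with the lower bounds supplied by Lemmas~\ref{lower-Type-a} (Type a) and \ref{lower-Type-b} (Type b), and then translate the resulting inequalities on $a$ into inequalities on $p$ via the closed form $a_p = \frac{1}{\sqrt{6}}((5+2\sqrt{6})^p - (5-2\sqrt{6})^p)$ from \eqref{a_p_3}.

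For Type a, which is applicable to all four choices $c \in \{c_1^\pm, c_2^\pm\}$, I would first bound $c$ polynomially in $a$. Using $r = \sqrt{6a^2+4} < a\sqrt{6.1}$ one gets $c_1^\pm < (7 + 2\sqrt{6.1})\,a$ and $c_2^\pm < (7 + 2\sqrt{6.1})\,a(6a^2+4) + 4a\sqrt{6.1}$, so $\log(10.7\,c^2)$ becomes an explicit affine function of $\log a$; in the $c_1^-$ case Proposition~\ref{proposition-1} already provides the cleaner estimate $l/\log(el) < 9\cdot 10^{13}\,\log^2(27a)$. Combining the relevant upper bound with the lower bound $l \geq \frac{1}{12}(-2 + \sqrt{4 + 3\sqrt{a}})$ from Lemma~\ref{lower-Type-a} yields a transcendental inequality in $a$ alone. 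Substituting $a = a_p$ and incrementing $p$ until the inequality fails should give $p \leq 85$ for $c = c_1^\pm$ and $p \leq 89$ for $c = c_2^\pm$. Plugging those extremal $p$ back into Proposition~\ref{proposition-1} then produces the claimed $l \leq 2.52 \cdot 10^{20}$ and $l \leq 2.56 \cdot 10^{21}$, respectively.

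For Type b, which occurs only when $c = c_2^\pm$, Lemma~\ref{lower-Type-b} supplies the considerably sharper bound $m > (5\sqrt{6} - 12)a/4$. Since $m \leq l$, this forces $l$ to grow linearly in $a$, while the Matveev-type upper bound grows only polylogarithmically in $a$. The resulting inequality, after substituting $a = a_p$, should force $p \leq 22$ and consequently $l \leq 3 \cdot 10^{20}$.

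The main obstacle is entirely numerical rather than conceptual: all the structural ingredients have already been assembled in Sections~\ref{section_2} and \ref{section_3}. The work consists in evaluating the Proposition~\ref{proposition-1} bound at successive candidate values of $p$ and locating the crossover at which the lower bound overtakes it. Because $a_p$ grows as roughly $(5+2\sqrt{6})^p \approx 9.9^p$, the crossover occurs at sizable $p$, and one must retain enough precision in the logarithms to avoid an off-by-one error in the maximal admissible $p$.
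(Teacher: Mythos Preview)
Your proposal is correct and follows essentially the same route as the paper: the paper's own proof of this lemma is the single sentence ``Combining this with Proposition~\ref{proposition-1}, Lemmas~\ref{lower-Type-a} and \ref{lower-Type-b} we obtain the following result,'' applied after recording the closed form \eqref{a_p_3} for $a_p$. Your write-up simply spells out the numerical mechanics (bounding $c$ polynomially in $a$, then iterating over $p$ until the lower bound overtakes Proposition~\ref{proposition-1}) that the paper leaves implicit.
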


For the remaining cases, we will use the following lemma which is a slight modification of the original version of Baker-Davenport reduction method (see \cite[Lemma~5a]{Dujella-Pethoe:1998}). 

\begin{lemma}\label{Dujella-Pethoe}
Assume that $M$ is a positive integer. Let $p/q$ be a convergent of the continued fraction expansion of $\kappa$ such that $q>6M$ and let 
$$ \eta=\parallel\mu q\parallel-M\cdot\parallel\kappa q\parallel,$$ 
where $ \parallel\cdot\parallel $ denotes the distance from the nearest integer. If $\eta>0,$ then there is no solution of the inequality 
$$0<l\kappa-m+\mu<AB^{-l}$$ 
in integers $l$ and $m$ with 
$$\frac{\log(Aq/\eta)}{\log(B)}\leq l\leq M.$$
\end{lemma}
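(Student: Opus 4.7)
The plan is to argue by contradiction. Suppose $(l,m)$ is a solution of $0<l\kappa-m+\mu<AB^{-l}$ with $\log(Aq/\eta)/\log B \le l \le M$; I want to show the hypothesis $\eta>0$ forces this to be impossible. The first step is to multiply the inequality through by $q$ and then split the resulting linear form around the convergent $p/q$, writing
$$0 < q(l\kappa - m + \mu) = l(q\kappa - p) + (lp - mq + q\mu) < AqB^{-l}.$$
The point of this decomposition is that $p$ is the nearest integer to $q\kappa$ — a standard property of convergents, guaranteed by $|q\kappa - p| < 1/q_{n+1} < 1/q < 1/2$ — so $|q\kappa - p| = \|q\kappa\|$, while on the other hand $lp - mq \in \ZZ$.

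The two estimates I would then use are the following. Since $l \le M$, the first summand is bounded by $|l(q\kappa - p)| \le M\|q\kappa\|$. For the second, because $lp - mq$ is an integer, the quantity $lp - mq + q\mu$ is an integer translate of $q\mu$; hence its absolute value is at least the distance from $q\mu$ to the nearest integer, i.e., $|lp - mq + q\mu| \ge \|q\mu\|$. Combining these two bounds via the reverse triangle inequality gives
$$|q(l\kappa - m + \mu)| \;\ge\; \|q\mu\| - M\|q\kappa\| \;=\; \eta,$$
which is positive by hypothesis. Comparing this lower bound with the upper bound $AqB^{-l}$ yields $\eta < AqB^{-l}$, equivalently $B^l < Aq/\eta$, i.e., $l < \log(Aq/\eta)/\log B$. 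This contradicts the assumed lower bound $l \ge \log(Aq/\eta)/\log B$, completing the proof.

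The only subtle point — and the main obstacle, such as it is — is the sign and absolute value bookkeeping: the upper bound $AqB^{-l}$ is stated for a positive quantity, but the reverse triangle inequality produces a lower bound on the absolute value, so the positivity hypothesis $\eta > 0$ is essential to conclude a nontrivial inequality. The technical condition $q > 6M$ is not needed for the logic itself; it only serves to make the hypothesis $\eta > 0$ easy to achieve in practice, since $\|q\kappa\| < 1/q$ for a convergent forces $M\|q\kappa\| < M/q < 1/6$, leaving room for $\|q\mu\|$ to exceed it and thereby for the method to effectively reduce the upper bound on $l$.
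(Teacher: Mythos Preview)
Your proof is correct and is exactly the standard argument for this reduction lemma. Note, however, that the paper does not supply its own proof of this statement: it is quoted as a known tool, attributed to Dujella and Peth\H{o} \cite[Lemma~5a]{Dujella-Pethoe:1998}, and used as a black box in the computations of Section~\ref{section_4}. So there is no ``paper's proof'' to compare against; your write-up simply fills in the omitted justification, and it does so faithfully to the original source.
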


 Dividing $0<\Lambda<11.7\beta^{-2m}$ and $0<\Lambda<4.4a^2\beta^{-2m}$ by $\log\beta$ and using the fact that we have $\beta^{-2m}<\alpha^{-l}$ leads to an inequality of the form \begin{equation}\label{last}
0<l\kappa-m+\mu<AB^{-l},
\end{equation} 
where we consider for solutions of Type $a$
$$
\kappa:=\frac{\log\alpha}{\log\beta},\quad\mu:=\frac{\log\gamma}{\log\beta},\quad A:=\frac{11.7}{\log\beta},\quad B:=\alpha,
$$
and for solutions of Type $b$
$$
\kappa:=\frac{\log\alpha}{\log\beta},\quad\mu:=\frac{\log\gamma}{\log\beta},\quad A:=\frac{4.4a^2}{\log\beta},\quad B:=\alpha.
$$

For the remaining proof, we use Mathematica to apply Lemma~\ref{Dujella-Pethoe}. For the computations, if the first convergent such that $q>6M$ does not satisfy the condition $\varepsilon>0,$ then we use the next convergent until we find the one that satisfies the conditions. The following table provides the information on the results obtained.
\vspace{5mm}
$$
\begin{tabular}{|c|c|c|c|c|c|c|c|}
\hline $c$ &$b=2a$ & $b=3a$ & $b=6a$     \\  \hline \hline
$c_1^{\pm}$ & $l\le 4$ & $l\le 4$ & $l\le 4$     \\ \hline 
$c_2^\pm$ & $l\le 5$ & $l\le 5$ & $l\le 4$      \\ \hline
\end{tabular}
$$
\vspace{5mm}

Therefore, in all cases, we can conclude that 
\begin{align}\label{Upper bound l}
3\le m\le l\le 5.
\end{align}
Combining this with Lemma~\ref{lower-Type-a} and the relations \eqref{a_p_1}, \eqref{a_p_2}, and \eqref{a_p_3}, for solutions in Type $a$, we get 
$$
p\le   \begin{cases}
8,&\ k=2,\\
11,&\ k=3,\\
6,&\ k=6,
\end{cases}
$$
and therefore the equation $Q_m=P_l$ has no solutions in this range. Combining now \eqref{Upper bound l} with Lemma~\ref{lower-Type-b}, in all cases, we get $a\le 98$, which contradicts the fact that $b=ka>10^5$, with $k=2,3,6.$ Finally, we need to see what happens when $m\in \{0, 1,2\}.$ For $m=0,$ we get $x=Q_0=P_0=2$, which gives $d=0.$ By following what is done in the proof of \cite[Lemma 3.3]{aft}, one can easily see that the only solution of equation \eqref{eq-Q_m=P_l} if $m\in \{1, 2\}$ is $(l,m)=(\nu, 1)$ for $z_0=t$ i.e. $x=Q_1=P_{\nu +1}.$ In this case we have 
$$
x=Q_1=P_{\nu +1}=r(T_\nu\pm aU_\nu)+a(bU_\nu \pm T_\nu),
$$
which implies $d=(x^2-4)/a=d_\pm.$

\subsection{Proof of Theorem \ref{main result} with $c=c_3^\pm$}

In the case $c=c_3^\pm$, recall that the problem is more complicated to solve if $s\equiv \pm a\pmod{r}$ by considering the equation \eqref{eq-Q_m=P_l}.  The difficulty lies in the fact that it is not easy to find a lower bound for $l$ and $m$ in term of $a$ in the equation \eqref{eq-Q_m=P_l} if $s\equiv \pm a\pmod{r}.$ To overcome this situation, we will deal with this case by examining the equation $z=v_m=w_n$ using Lemma~\ref{lem:inital_terms}. Now, we will give the lower bounds of the indices $m$ and $n$ in the equation $z=v_m=w_n$, for $2<n<m<2n$ (where the relationship between $m$ and $n$ follows from \cite[Lemma 5]{Filipin-2009} if $m$ and $n$ have the same parity). First, we have the following result.

\begin{lemma}\label{lower-n-m}
\begin{enumerate}[i)]
    \item If the equation $z=v_{2m}=w_{2n}$ has a solution $(m,n)$ with $n > 1,$ then $m>0.495b^{-0.5}c^{0.5}.$
    \item If the equation $z=v_{2m+1}=w_{2n+1}$ has a solution $(m,n)$ with $n > 1,$ then $m^2>0.0625b^{-1}c^{0.5}.$
\end{enumerate}
\end{lemma}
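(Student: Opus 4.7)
The plan is to follow the strategy of Lemma~\ref{lower-Type-a}, now working with the sequences $\{v_{m}\}$ and $\{w_{n}\}$ and reducing modulo $c^{2}$ (rather than $a^{2}$), which is the natural choice since here $s^{2}=ac+4$ and $t^{2}=bc+4$. The first step is a routine induction on the recurrences $v_{m+2}=sv_{m+1}-v_{m}$ and $w_{n+2}=tw_{n+1}-w_{n}$ producing the analogue of Lemma~\ref{lem_6}:
\begin{align*}
v_{2m}&\equiv z_{0}+\tfrac{c}{2}\bigl(az_{0}m^{2}+sx_{0}m\bigr),\qquad w_{2n}\equiv z_{1}+\tfrac{c}{2}\bigl(bz_{1}n^{2}+ty_{1}n\bigr)\pmod{c^{2}},\\
v_{2m+1}&\equiv \tfrac{sz_{0}}{2}+\tfrac{c}{4}\bigl(m(m+1)\,asz_{0}+2(2m+1)x_{0}\bigr)\pmod{c^{2}},
\end{align*}
and the analogous formula for $w_{2n+1}$. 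Substituting the initial values from Lemma~\ref{lem:inital_terms} converts the coincidence $v_{m}=w_{n}$ into a congruence modulo $c$.

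For part (i), with $z_{0}=z_{1}=\pm 2$, $x_{0}=y_{1}=2$, the congruence takes the shape $am^{2}+sm\equiv bn^{2}+tn\pmod{c}$ (up to sign). Arguing by contradiction, assume $m\le 0.495\,b^{-1/2}c^{1/2}$. Since $\phi_{1}=(t+\sqrt{bc})/2>\beta$, the usual growth-rate comparison gives $n<m$, and direct estimation using $s<\sqrt{ac+4}$, $t<\sqrt{bc+4}$ and $k\in\{2,3,6\}$ shows $am^{2}+sm<c$ and $bn^{2}+tn\le bm^{2}+tm<c$. Hence the congruence becomes an honest equality $am^{2}+sm=bn^{2}+tn$, which after completing the square reads
\[
(\sqrt{a}\,m-\sqrt{b}\,n)(\sqrt{a}\,m+\sqrt{b}\,n+\sqrt{c})=(t-\sqrt{bc})n-(s-\sqrt{ac})m.
\]
The irrationality of $\sqrt{k}$ makes the integer $m^{2}-kn^{2}$ nonzero, so $|\sqrt{a}\,m-\sqrt{b}\,n|\ge a/(\sqrt{a}\,m+\sqrt{b}\,n)$; combined with the bounds $|s-\sqrt{ac}|<2/\sqrt{ac}$ and $|t-\sqrt{bc}|<2/\sqrt{bc}$ this forces $m^{2}\gtrsim ac$, which for $a\ge 38804$ contradicts the hypothesis.

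For part (ii), with $z_{0}=\pm t$, $z_{1}=\pm s$, $x_{0}=y_{1}=r$ and $z_{0}z_{1}>0$, the identity $v_{2m+1}=w_{2n+1}$ reduces to
\[
st\bigl(am(m+1)-bn(n+1)\bigr)+4r(m-n)\equiv 0\pmod{4c}.
\]
Multiplying through by $st$ and using $(st)^{2}\equiv 16\pmod{c}$ (together with the factorization $c\mid(st-4)(st+4)$, which produces a large divisor of $c$ on which $st\equiv\pm 4$) simplifies this to a congruence in which the leading factor $st$ has been absorbed. The threshold $m^{2}\le 0.0625\,c^{1/2}/b$ is calibrated precisely so that this simplified congruence traps both sides inside this divisor and becomes an equality. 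One then splits into two sub-cases: if $am(m+1)=bn(n+1)$ the residual equality reads $r(m-n)\equiv 0\pmod{c}$, and since $r\cdot|m-n|<c$ under the hypothesis this forces $m=n$, contradicting $n<m$; otherwise $am(m+1)-bn(n+1)$ is a nonzero integer multiple of $a$, and the Liouville-style estimate used in (i) supplies a lower bound on $m$ incompatible with the assumption. The principal obstacle is the bookkeeping in (ii): controlling the large factor $st\approx c\sqrt{ab}$, tracking precisely which divisor of $c$ survives when passing to $(st)^{2}\equiv 16$, and discarding the accidental integer solutions of $m(m+1)=kn(n+1)$ (for instance $(m,n)=(3,2)$ and $(20,14)$ for $k=2$), which cannot be ruled out on size grounds alone.
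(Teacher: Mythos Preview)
For part~(i) your self-contained argument is correct and in fact supplies a proof where the paper merely cites \cite{BaFi}: the identity
\[
(\sqrt{a}\,m-\sqrt{b}\,n)(\sqrt{a}\,m+\sqrt{b}\,n+\sqrt{c})=(t-\sqrt{bc})\,n-(s-\sqrt{ac})\,m,
\]
together with $a\mid(am^{2}-bn^{2})$ (since $b=ka$) and the bounds $|s-\sqrt{ac}|<2/\sqrt{ac}$, $|t-\sqrt{bc}|<2/\sqrt{bc}$, does yield the contradiction once the constants are checked.

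Part~(ii), however, has a genuine gap in your ``otherwise'' branch. After passing to the divisor $c'\ge\sqrt{c}$ on which $st\equiv\pm4$ (note: literally multiplying the congruence by $st$ does \emph{not} absorb the factor, since the $r(m-n)$ term then acquires one) and upgrading to the equality
\[
\pm 2am(m+1)+r(2m+1)=\pm 2bn(n+1)+r(2n+1),
\]
you propose to reuse the Liouville estimate from~(i). But that estimate worked only because $s\approx\sqrt{ac}$ and $t\approx\sqrt{bc}$ injected a factor of order $\sqrt{c}$ into the identity. Once $st$ has been replaced by $\pm4$ the only surviving coefficient is $r\approx a\sqrt{k}$, and the displayed equality rearranges to $a\bigl(m(m+1)-kn(n+1)\bigr)=\mp r(m-n)$, which yields merely $|m-n|\ge a/r\approx 1/\sqrt{k}$ --- vacuous. (Your separate worry about the ``accidental'' solutions of $m(m+1)=kn(n+1)$ is misplaced: those land in your first sub-case, where the equality immediately gives $r(m-n)=0$, hence $m=n$ and then $m=0$.)

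The paper closes the gap by exploiting a feature your outline never invokes: the lemma is applied only with $c=c_{3}^{\pm}$, so the explicit bound $c<519a^{5}$ is available. Reducing the displayed equality modulo~$a$ (where $b=ka\equiv0$) kills the quadratic terms and gives $r(2m+1)\equiv r(2n+1)\pmod{a}$; since $\gcd(a,r)=2$ one obtains $2m\equiv 2n\pmod{a/2}$. Now $m^{2}\le 0.0625\,c^{1/2}/b$ combined with $c<519a^{5}$ forces $2n<2m<a/4$, so the congruence becomes $m=n$, and substituting back gives $m=0$, the desired contradiction.
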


\begin{proof}
$i)$ The statement follows from the proof of \cite[Proposition 2.3]{BaFi}. Here we only have to use that $b>10^5>10^4.$

$ii)$ Using Lemma~\ref{lem:inital_terms} in the case of odd indices and from \cite[Lemma~12]{Filipin-2009}, we get what follows
\begin{equation}
\pm\dfrac{1}{2} astm(m+1)+r(2m+1)\equiv \pm\dfrac{1}{2} bstn(n+1)+r(2n+1) \pmod{c}.
\end{equation}
Because $(st)^2\equiv 16\pmod{c},$ we conclude that $st\equiv \pm 4\pmod{c'}$, for some $c'$ which is a divisor of $c$ and $c'\geq \sqrt{c}.$ Also the $\pm$ sign means that one of the congruences is true. Hence, we get 
\begin{equation}\label{eq-26}
\pm2am(m+1)+r(2m+1)\equiv \pm2bn(n+1)+r(2n+1) \pmod{c'}.
\end{equation}
Let us now assume the opposite i.e., $m^2\le 0.0625b^{-1}c^{0.5}.$ Then, it is easy to see that both sides of the congruence relation $(\ref{eq-26})$ are less than $c'$ and they have the same sign. More precisely, we have 
$$
\max \bigl( 2am(m+1), r(2m+1), 2bn(n+1), r(2n+1) \bigr)\leq 2bm(m+1)
$$ 
and 
$$
2bm(m+1)< 4bm^2\le \frac{c'}{4}.
$$ 
Therefore, we get
$$
|\pm2am(m+1)+r(2m+1)|<\dfrac{c'}{2}\quad \mbox{and}\quad |\pm2bn(n+1)+r(2n+1)|<\dfrac{c'}{2}.
$$ 
Note that in the case of the sign ``-", the two quantities $\pm2am(m+1)+r(2m+1)$ and $\pm2bn(n+1)+r(2n+1)$ are negative and in the case of the sign ``+" they are positive. Thus, we actually have the equations 
\begin{align}\label{rm=rn}
\pm2am(m+1)+r(2m+1)= \pm2bn(n+1)+r(2n+1).
\end{align}
instead of a congruence. Notice that in above $b=ka$ with $k=2,3,6.$ Considering now congruence modulo $a,$ we get
$$
r(2m+1)\equiv r(2n+1)\pmod{a}
$$
which implies 
\begin{align}\label{2m=2n}
2m\equiv 2n\pmod{\dfrac{a}{2}}.
\end{align}
Since $c_3^\pm<519a^5,$ we get 
$$
2n<2m\le 2\cdot 0.0625^{0.5}\cdot b^{-0.5}\cdot \left(519a^5\right)^{0.25}<\dfrac{a}{4}.
$$
Therefore, the congruence \eqref{2m=2n} gives an equation of the form $2m=2n.$ So, from \eqref{rm=rn}, we get $m=n=0$, which is a contradiction. This completes the proof.
\end{proof}
Now, we will combine the lower bounds for indices $m$ and $n$ together with the result obtained using Baker's theory of linear forms in logarithms to prove the main Theorem for large values of $p.$ Using the main result in  \cite{Baker-Wustholz:1993} the third author proved in \cite{Filipin-2008} that $z=v_m=w_n,$ for $n>2,$ implies
\begin{align}
\dfrac{m}{\log (m+1)}< 6.543\cdot 10^{15} \log^2 c.
\end{align}
Combining this with Lemma~\ref{lower-n-m}, in the case of even indices, we get 

\begin{equation}\label{mn-p-even}
\frac{2\cdot0.495b^{-0.5}c^{0.5}}{\log(2\cdot0.495b^{-0.5}c^{0.5}+1)}<6.543\cdot10^{15}\log^2c,
\end{equation}
and in the case of odd indices, we get the inequality
\begin{equation}\label{mn-p-odd}
\frac{2\cdot0.0625^{0.5}b^{-0.5}c^{0.25}+1}{\log(2\cdot0.0625^{0.5}b^{-0.5}c^{0.25}+2)}<6.543\cdot10^{15}\log^2c.
\end{equation}
Therefore, using Maple, the solutions obtained for inequalities \eqref{mn-p-even} and \eqref{mn-p-odd} are summarized in the following lemma.

\begin{lemma}\label{bounds}
\begin{enumerate}[1)]
\item For the $D(4)$-triples $\{a,2a,c_3^\pm\}$ with $a=a_p$ $(p\ge 1)$ defined in \eqref{a_p_1}, if $z=v_{2m}=w_{2n}$ has a solution $(m,n),$ then $p\le 14$ and $m\le 2.6\cdot10^{21}$ but if $z=v_{2m+1}=w_{2n+1}$ has a solution $(m,n),$ then $p\le 40$ and $m\le 2.1\cdot10^{22}.$

\item For the $D(4)$-triples $\{a,3a,c_3^\pm\}$ with $a=a_p$ $(p\ge 1)$ defined in \eqref{a_p_2}, if $z=v_{2m}=w_{2n}$ has a solution $(m,n),$ then $p\le 19$ and $m\le 3\cdot10^{21}$ but if $z=v_{2m+1}=w_{2n+1}$ has a solution $(m,n),$ then $p\le 54$ and $m\le 2.2\cdot10^{22}.$

\item For the $D(4)$-triples $\{a,6a,c_3^\pm\}$ with $a=a_p$ $(p\ge 1)$ defined in \eqref{a_p_3}, if $z=v_{2m}=w_{2n}$ has a solution $(m,n),$ then $p\le 11$ and $m\le 2.6\cdot10^{21}$ but if $z=v_{2m+1}=w_{2n+1}$ has a solution $(m,n),$ then $p\le 30$ and $m\le 2.1\cdot10^{22}.$
\end{enumerate}
\end{lemma}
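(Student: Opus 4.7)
The plan is to treat the inequalities \eqref{mn-p-even} and \eqref{mn-p-odd} as constraints in the single variable $a$ (via $b=ka$ and the closed form $a=a_p$ from \eqref{a_p_1}, \eqref{a_p_2} or \eqref{a_p_3}), to solve them numerically for the largest admissible $p$, and then to read off the corresponding bound on $m$ directly from the linear-forms-in-logarithms estimate $m/\log(m+1)<6.543\cdot 10^{15}\log^2 c$.

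The first step is to estimate $c=c_3^\pm$ in terms of $a$. From
\[
c_3^\pm=(a^2b^2+6ab+9)(a+b\pm 2r)\mp 4r(ab+3),\quad b=ka,\quad r=\sqrt{ka^2+4},
\]
one obtains, for each fixed $k\in\{2,3,6\}$, two-sided bounds of the shape $c_1(k)\,a^5\le c_3^\pm\le c_2(k)\,a^5$; in particular the estimate $c_3^\pm<519a^5$ that was already used in the proof of Lemma \ref{lower-n-m}. Hence $\log c_3^\pm=5\log a+O(1)$, and the quantities $b^{-0.5}c^{0.5}$ and $b^{-0.5}c^{0.25}$ behave essentially like constants (depending on $k$) times $a^2$ and $a^{3/4}$, respectively.

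Substituting these estimates into \eqref{mn-p-even} reduces the constraint to an inequality of the shape $a^2/\log a\lesssim \text{const}\cdot\log^2 a$, and similarly \eqref{mn-p-odd} reduces to $a^{3/4}/\log a\lesssim \text{const}\cdot\log^2 a$. Both are transcendental inequalities in a single real variable that are easily resolved numerically, and the odd case tolerates substantially larger $a$ than the even case. Converting the resulting ceiling on $a$ into a ceiling on $p$ via the closed form for $a_p$, which grows geometrically with ratio $3+2\sqrt{2}$, $2+\sqrt{3}$ or $5+2\sqrt{6}$ respectively, produces the stated bounds $p\le 14,19,11$ in the even case and $p\le 40,54,30$ in the odd case.

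Finally, once $p$ is confined to one of these finite ranges, $c=c_3^\pm(a_p)$ is bounded by an explicit constant; plugging the resulting bound on $\log^2 c$ back into $m/\log(m+1)<6.543\cdot 10^{15}\log^2 c$ and solving for $m$ yields the uniform bounds $m\le 2.6\cdot 10^{21}$ (or $3\cdot 10^{21}$ for $k=3$) in the even case, and $m\le 2.1\cdot 10^{22}$ (or $2.2\cdot 10^{22}$ for $k=3$) in the odd case. The only real obstacle is computational bookkeeping: for each of the six sub-cases one must verify that the numerical optimisation is sharp enough that no spurious $p$ slips through because of a too-loose upper envelope $c_3^\pm\le c_2(k)a^5$, and that the lower envelope is used wherever the inequality requires a lower bound on $c$. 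Maple, as the authors employ, is well suited to this routine verification.
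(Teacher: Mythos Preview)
Your proposal is correct and follows essentially the same route as the paper: the authors simply state that the bounds in the lemma are obtained by solving the inequalities \eqref{mn-p-even} and \eqref{mn-p-odd} numerically with Maple, and your write-up spells out precisely how those inequalities reduce to transcendental constraints in the single variable $a$ (via $b=ka$ and $c=c_3^\pm(a)$), how the closed form for $a_p$ converts the resulting ceiling on $a$ into one on $p$, and how the bound on $m$ then falls out of the Baker--W\"ustholz estimate $m/\log(m+1)<6.543\cdot 10^{15}\log^2 c$ at the extremal $p$.
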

Now, it remains to see what is happening for small values of $p$ by applying Lemma~\ref{last}.
For this, we also need the inequality which follows from $v_m=w_n, n > 2$ (that is \cite[Lemma~9]{Filipin}),
\begin{align*}
0&<m\log\left(\dfrac{s+\sqrt{ac}}{2} \right)-n\log\left(\dfrac{t+\sqrt{bc}}{2} \right)+\log \dfrac{\sqrt{b}(x_0\sqrt{c}+z_0\sqrt{a})}{\sqrt{a}(y_1\sqrt{c}+z_1\sqrt{b})}\\
&<2ac\left(\dfrac{s+\sqrt{ac}}{2}\right)^{-2m}.
\end{align*}
Using Lemma~\ref{bounds}, we apply Lemma~\ref{Dujella-Pethoe} considering $c=c_3^\pm$. Note that in the case of even indices we have $z_0=z_1=\pm 2,\; x_0=y_1=2$ and in the case of odd indices we have $x_0=y_1=r,\; z_0=\pm t,\; z_1=\pm s$ and $z_0z_1>0.$ We have done the reduction using Mathematica. In all cases according to $b=ka$ with $k=2,3,6$, after at most $2$ steps of reduction, we see that $z=w_m=w_n$ implies $n \le m \le 2.$ In these small ranges, it is not diﬃcult to check that all solutions of $z=v_m=w_n$ will give the extension of $D(4)$-triple $\{a, b, c\}$ to a quadruple with $d=d_-$ or $d=d_+$. This completes the proof of Theorem~\ref{main result}.

\section{Final conclusion}\label{section_5}

Let us mention in this section that the choice of the $D(4)$-pairs $\{a, ak\}$ with $k\in \{2,3,6\}$ is not random. In fact, the idea is to investigate irregular $D(4)$-quadruples, which contain the pairs $\{a, ka\}$, for any positive integer $k.$ In view of the recent work done by the second author in \cite{mbt}, it appears that if $b=ka$ we must have $a<b\le 6.85a$ i.e. $k\in \{2,3,4,5,6\}.$ For $k=4,$ it is easy to check from equation \eqref{pair eqn} that there is no $D(4)$-pair of the form $\{a, 4a\}$. On the other hand, for $k=5$, we obtain an equation of the type
$$
r^2-5a^2=4,
$$
whose positive solutions are defined by $(r, a)=(L_{2n}, F_{2n})$, where $F_n$ and $L_n$ denote the $n$-th Fibonacci and Lucas numbers respectively. Here also the extension of the $D(4)$-pair $\{F_{2n}, 5F_{2n}\}$ should be skipped since it has already been studied (see \cite{dujram}, \cite{Filipin}). In short, it remains to study $k=2,3,6$, which is the aim of this paper. 

\section*{Acknowledgements}
The first author is supported by IMSP, Institut de Math\'ematiques et de Sciences Physiques de l'Universit\'e d'Abomey-Calavi. The second and the third authors are supported by the Croatian Science Foundation, grant HRZZ-IP-2018-01-1313. The fourth author is partially supported by Purdue University Northwest.


%
Institut de Math\'ematiques et de Sciences Physiques, Universit\'e d'Abomey-Calavi, B\'enin \\
Email: adedjnorb1988@gmail.com \\[6pt]
University of Split, Faculty of Science, Ru\dj{}era Bo\v{s}kovi\'{c}a 33,
21000 Split, Croatia\\
Email: marbli@pmfst.hr \\[6pt]
Faculty of Civil Engineering, University of Zagreb, Fra Andrije
Ka\v{c}i\'{c}a-Mio\v{s}i\'{c}a 26, 10000 Zagreb, Croatia \\
Email: filipin@grad.hr \\[6pt]
Department of Mathematics and Statistics, Purdue
University Northwest, 1401 S, U.S. 421, Westville IN 46391 USA \\
Email: atogbe@pnw.edu\\[6pt]

\end{document}